\documentclass[11pt,a4paper]{article}
\usepackage[T1]{fontenc}
\usepackage{lmodern}
\usepackage{amsmath,amssymb,amsthm,mathtools}

\usepackage[margin=29mm,headheight=14pt]{geometry}
\usepackage{microtype}
\usepackage{enumitem}
\usepackage{fancyhdr}
\usepackage{xcolor}
\definecolor{linkblue}{rgb}{0,0,0.45}
\usepackage[colorlinks=true,linkcolor=linkblue,citecolor=linkblue,
            urlcolor=linkblue,unicode]{hyperref}

\hypersetup{pdftitle={Non-endpoint Fourier restriction for flat perturbations of curved-root surfaces},pdfsubject={Fourier restriction, flat perturbations, finite-type surfaces},pdfauthor={}}
\numberwithin{equation}{section}
\newtheorem{theorem}{Theorem}[section]
\newtheorem{corollary}{Corollary}[section]
\newtheorem{proposition}[theorem]{Proposition}
\newtheorem{lemma}[theorem]{Lemma}
\theoremstyle{remark}
\newtheorem{remark}[theorem]{Remark}
\newcommand{\R}{\mathbb R}
\newcommand{\E}{\mathcal E}
\newcommand{\dd}{\,\mathrm d}
\newcommand{\supp}{\operatorname{supp}}

\newcommand{\norm}[1]{\lVert #1\rVert}
\newcommand{\abs}[1]{\lvert #1\rvert}
\newcommand{\qc}{q_{\mathrm c}}
\newcommand{\pc}{p_{\mathrm c}}
\newcommand{\hlin}{h_{\mathrm{lin}}}
\newcommand{\hrefdoi}[1]{\href{https://doi.org/#1}{\nolinkurl{#1}}}

\let\oldthanks\thanks
\renewcommand{\thanks}[1]{%
	\renewcommand{\thefootnote}{}
	\oldthanks{#1}
	\renewcommand{\thefootnote}{\arabic{footnote}}
}

\date{}

\begin{document}
\title {\textbf{Stability of Fourier Restriction for a Class of Finite Type Surfaces in $\mathbb R^3$}	
	\author{Wenjuan Li, \hspace{0.2cm}Hongqiang Wang, \hspace{0.2cm}Huiju Wang}
	\thanks{2020 {\it Mathematical Subject Classification}: 42B10, 42B20}\hspace{0.2cm}
	\thanks{{\it Keywords and phrases:} Fourier restriction; flat perturbations; finite type; decoupling; Newton polyhedra.}
	\thanks{This work is supported by the National Key R\&D Program of China (No.2023YFA1010800); Natural Science Foundation of China (No.12301113; No.12271435).}}

\date{}
\maketitle
\thispagestyle{empty}

\begin{abstract}
We establish the stability of the optimal non-endpoint Fourier restriction range under smooth flat perturbations of curved-root surfaces in $\mathbb R^3$. Specifically, we consider graphs of
$$\phi(x_1,x_2)=(x_2-x_1^m)^n+R(x_1,x_2),$$
where $m,n\ge2$ are integers and $R$ is smooth and flat at the origin. On a sufficiently small neighborhood depending on the perturbation, we prove the local $L^p(\mathbb R^3)\to L^2(S,d\sigma)$ restriction estimate for $1\le p<p_c$, where
$$p_c=\frac{q_c}{q_c-1}, \qquad q_c=\max\left\{2+\frac{2mn}{m+1},2n\right\}.$$
If the nonnegative localizing density is positive at the origin, an adapted-box example shows that the estimate fails for $p>p_c$. Thus arbitrary smooth flat perturbations preserve the optimal restriction range away from the critical endpoint. For $n\ge3$, the main local estimate combines planar curve decoupling with a one-dimensional finite-type extension estimate, whose principal scale factors cancel exactly. For $n=2$, we prove a uniform $C^4$ perturbation estimate using a weighted convolution bound and an anisotropic Whitney decomposition. Anisotropic rescaling and summation complete the proof. 

\end{abstract}

\section{Introduction}

\subsection{Background}

Let $D \subset \R^2$ be a bounded open neighborhood of the origin,
and let $\phi \in C^\infty(D;\R)$ satisfy $\phi(0)=0$ and $\nabla\phi(0)=0$.
Writing
\[
 \Gamma_\phi(\xi)=(\xi_1,\xi_2,\phi(\xi)),
\]
we define the Fourier extension operator
\begin{equation}\label{eq:extension}
 \E_{\phi,D}g(x)
 =
 \int_D e^{ix\cdot\Gamma_\phi(\xi)}g(\xi)\dd \xi,
 \qquad x\in\R^3,
\end{equation}
for $g\in L^2(D)$.
The local $L^2$ restriction problem asks for the exponents $q$
for which
$\E_{\phi,D}:L^2(D)\to L^q(\R^3)$
is bounded on a sufficiently small neighborhood $D$ of the origin.
By duality, this is equivalent to the restriction estimate
\[
 \|\widehat f\circ\Gamma_\phi\|_{L^2(D)}
 \lesssim
 \|f\|_{L^{q'}(\R^3)}
\]
for Schwartz functions $f$, where $q'$ is the H\"older conjugate of $q$.

The Newton-polyhedral theory of Ikromov and M\"uller relates the critical $L^2$ restriction exponent for finite-type surfaces in $\R^3$ 
to the restriction height of the defining phase; see \cite{IMbook,IMI}.
Their sufficiency theorem for smooth phases in nonadapted coordinates assumes a factorization condition, denoted by Condition~\textup{(R)} \cite[Theorem~1.7]{IMI}.
A flat perturbation of the phase function leaves the Taylor series, and hence the Newton polyhedron in the given coordinates, unchanged,
but need not preserve this smooth factorization.
The resulting question is whether the sharp restriction range remains stable when the Taylor data are fixed but the factorization is lost.

We address this question for the family
\begin{equation}\label{eq:phase}
 \phi(\xi_1,\xi_2) = (\xi_2-\xi_1^m)^n+R(\xi_1,\xi_2),
 \qquad m,n\geq2,
\end{equation}
where $m,n$ are integers and $R$ is smooth and flat at the origin:
\begin{equation}\label{eq:flat}
 \partial^\alpha R(0)=0
 \qquad\text{for every multi-index }\alpha.
\end{equation}
The stability problem for this family was posed by M\"uller
in \cite[Section~6, Problem~8]{Problems}.
We prove that, for each such $R$, the full non-endpoint
$L^2$ extension range of the unperturbed model remains valid
on a sufficiently small neighborhood of the origin.

Related questions have been studied for other classes of
surfaces and measures.
Buschenhenke, M\"uller, and Vargas considered flat
one-variable perturbations of the hyperbolic paraboloid
\cite{BMV}, while Li established smooth affine restriction
estimates with curvature-weighted measures \cite{Li}.
Here we consider unweighted restriction for perturbations
of a model with the curved multiple root $\xi_2=\xi_1^m$.

Our proof combines Palle's restriction estimate away from
the principal root \cite[Proposition~4.3]{Palle}
with uniform local estimates near that root.
These local estimates provide the stability needed to
handle the general two-variable remainder in \eqref{eq:phase}.

\subsection{Main results}

Define
\begin{equation}\label{eq:critical}
 \qc:= \max\left\{2+\frac{2mn}{m+1},\,2n\right\},
 \qquad \pc:=\frac{\qc}{\qc-1}.
\end{equation}
For the unperturbed model
$\phi_0(\xi)=(\xi_2-\xi_1^m)^n$, the critical extension exponent
is $\qc$. More precisely, for every sufficiently small bounded
open neighborhood $U$ of the origin,
\[
 \norm{\E_{\phi_0,U}}_{L^2(U)\to L^q(\R^3)}<\infty
 \iff
 q\in[\qc,\infty].
\]
The critical exponent is given in
\cite{IMI}.

We extend the non-endpoint range to the case that $\phi_0$ is perturbed by a smooth function which is flat at the origin.

\begin{theorem}\label{thm:main}
Suppose that $\phi$ is given by \eqref{eq:phase}, where $R$ is real-valued, smooth, and flat at the origin.
There is a bounded open neighborhood $U$ of the origin, depending on $m,n$ and $R$, 
such that, for every $q\in(\qc,\infty]$,
\begin{equation}\label{eq:mainextension}
 \norm{\E_{\phi,U}g}_{L^q(\R^3)} \leq C_q\norm{g}_{L^2(U)},
 \qquad g\in L^2(U).
\end{equation}
Consequently, for $S=\Gamma_\phi(U)$ and every nonnegative $\eta\in C_c^\infty(S)$, the restriction estimate
\begin{equation}\label{eq:mainrestriction}
 \left(\int_S\abs{\widehat f}^{\,2}\dd\mu\right)^{1/2}
 \leq C_{p,\eta}\norm{f}_{L^p(\R^3)},
 \qquad 1\leq p<\pc,
\end{equation}
holds for all $f\in\mathcal{S}(\R^3)$, where $\dd\mu=\eta\dd\sigma$ and $\dd\sigma$ denotes surface measure.
Moreover, if $\eta(\Gamma_\phi(0))>0$, then \eqref{eq:mainrestriction} can hold only if $p\leq\pc$.
\end{theorem}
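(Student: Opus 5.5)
We follow the strategy announced in the introduction: split $U$ into a region near the principal root $\xi_2=\xi_1^m$ and its complement, and prove \eqref{eq:mainextension} separately on each piece. First, change variables $y_1=\xi_1$, $y_2=\xi_2-\xi_1^m$, so that $\phi$ becomes $y_2^n+\tilde R(y)$ with $\tilde R$ still flat at the origin. Fix a large constant $N$ and split $U$ into two sets. The \emph{root region} is $\{|y_2|\le N|y_1|^m\}$. The complementary region, away from the principal root, is handled by Palle's estimate \cite[Proposition~4.3]{Palle}. On that region the principal part of $\phi$ is controlled by the $\xi_2$-direction of the Newton polyhedron, and the flat remainder is harmless after shrinking $U$. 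This gives the range $q>2n$ there; possibly it gives the full range $q>\qc$ after checking the hypotheses. The main work is in the root region.

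In the root region we decompose dyadically in $|y_1|\sim 2^{-j}$ and then anisotropically in $y_2$. We use boxes $|y_1|\sim 2^{-j}$, $|y_2|\sim 2^{-k}$ with $k\ge mj-C$, together with a Whitney-type decomposition near $y_2=0$ adapted to the true zero set of $\partial_{y_2}^{n-1}\phi$ or $\partial_{y_2}\phi$. On each box we rescale $y_1=2^{-j}z_1$, $y_2=2^{-k}z_2$ and normalize the phase by $2^{kn}$. The rescaled phase is $z_2^n+(\text{lower-order terms from }\xi_1^m)+2^{kn}\tilde R(2^{-j}z_1,2^{-k}z_2)$. Flatness gives $|\partial^\alpha \tilde R|\le C_{\alpha,M}|y|^M$ for every $M$. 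After shrinking $U$ (this is where $U$ depends on $R$), the rescaled perturbation is therefore $O(2^{-j})$ in $C^4$, uniformly in $j,k$.

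The estimate on each box depends on $n$. For $n\ge3$, we view the rescaled surface as a one-parameter family of planar curves. For fixed $z_1$ the curve $z_2\mapsto z_2^n$ has finite type $n$; in the other direction, curvature comes from the $\xi_1^m$ term. We apply $\ell^2$ decoupling for planar curves in one variable and a one-dimensional finite-type extension estimate $L^2\to L^q$ with $q>2n$ in the degenerate variable. We then check that the scale factors $2^{-j},2^{-k}$ from the Jacobian and from $L^q$ rescaling cancel at $q=\qc$, leaving a gain $2^{-\epsilon(j+k)}$ for $q>\qc$. For $n=2$ the surface is nondegenerate away from a curve on which the Hessian degenerates. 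Here we instead prove a uniform $C^4$-stable bound: a weighted convolution estimate $|\widehat{\mu}|*|\widehat\mu|$, or a Tomas--Stein $TT^*$ argument with decay controlled by the Hessian weight, combined with an anisotropic Whitney decomposition around the zero set of the Hessian. In both cases the bounds depend only on $C^4$ norms, so they hold uniformly across boxes. Summing the geometric series in $j,k$ gives \eqref{eq:mainextension}. The case $q=\infty$ is trivial. The restriction estimate \eqref{eq:mainrestriction} then follows by duality, since $\eta$ is bounded and the density of $\sigma$ with respect to $d\xi$ is smooth.

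For necessity, we test the dual extension inequality on $g=\chi_B$ with $B=\{|\xi_1|\le\delta,\ |\xi_2-\xi_1^m|\le\delta^m\}$ (an adapted box). On $B$ the phase is $O(\delta^{mn})$, with the flat $R$ negligible, so $|\E g|\gtrsim|B|$ on a dual region of volume about $\delta^{-1-m-mn}$. This forces $q\ge 2+2mn/(m+1)$. The box $\{|\xi_1|\le c,\ |\xi_2-\xi_1^m|\le\delta\}$ similarly forces $q\ge 2n$. Since $\eta(\Gamma_\phi(0))>0$, these examples transfer to $\mu$. The main obstacle I expect is the $n\ge3$ box estimate: one must verify exact cancellation of the principal scale factors while keeping every constant stable under the $C^4$-small perturbation, including near $y_1=0$, where the parameterization of the curve family degenerates.
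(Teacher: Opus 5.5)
Your architecture matches the paper's outline: Palle's estimate off the root, dyadic annuli in $\xi_1$, decoupling plus a fiber estimate for $n\ge3$, and a weighted convolution bound with a Whitney decomposition for $n=2$. However, the step that is supposed to make it stable under $R$ is false. You rescale each box $\abs{y_1}\sim2^{-j}$, $\abs{y_2}\sim2^{-k}$ separately, normalize by $2^{kn}$, and assert that the rescaled remainder is $O(2^{-j})$ in $C^4$ uniformly in $j,k$. But
\[
\partial_z^\alpha\bigl[2^{kn}\tilde R(2^{-j}z_1,2^{-k}z_2)\bigr]
=2^{k(n-\alpha_2)-j\alpha_1}(\partial^\alpha\tilde R)(y),
\]
and flatness at the origin only gives $\abs{\partial^\alpha\tilde R(y)}\lesssim_M\abs{y}^M\approx2^{-jM}$ on such a box, since $\abs y\approx\abs{y_1}$. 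For fixed $j$ and $k\to\infty$, i.e.\ precisely near the root, this is unbounded. The remainder $R$ is flat at the origin, not along the curve $\xi_2=\xi_1^m$. For instance, $R=e^{-1/\xi_1^2}(\xi_2-\xi_1^m)$ produces the rescaled term $2^{k(n-1)}e^{-2^{2j}/z_1^2}z_2$.

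Recentering at the true zero set of $\partial_{y_2}^{n-1}\phi$ does not help. The lower transverse coefficients $\partial_{y_2}^i\phi(y_1,A(y_1))$, $0\le i\le n-2$ (for $n=2$, the value $\phi(y_1,A(y_1))$), are in general nonzero flat functions of $y_1$. No affine ambient map removes them, and after normalization they become $2^{k(n-i)}$ times such functions. So no bound ``depending only on $C^4$ norms'' can be applied box by box. Moreover, for $k>mj$ your rescaled box is not a normalized graph. Its second ambient coordinate is $2^{-k}z_2+2^{-mj}z_1^m$, so the $\xi_1^m$ terms dominate and the box is a thin curved strip.

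The missing idea is to rescale only once, by the annular parameter $r$. There flatness does give $\norm{R_r}_{C^N}=O(r^K)$ on a fixed compact set. One then proves a single estimate on the whole normalized horn that is uniform under small perturbations and handles all transverse scales internally, using only information insensitive to those lower coefficients.

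For $n\ge3$ this is Proposition~\ref{prop:strip}. Around the perturbed curve $A_r$ one has $\partial_2^{n-1}\Phi(u,v)=v\int_0^1\partial_2^n\phi_r(u,A_r(u)+\theta v)\dd\theta$, so $\abs{\partial_2^{n-1}\Phi}\approx\delta$ on the shell $\abs v\sim\delta$. Hence the van der Corput/$TT^*$ fiber bound ($L^2\to L^{2n}$ with loss $\delta^{-1/(2n)}$) never sees the coefficients of order at most $n-2$. Decoupling and Bernstein use only frequency supports. The loss $\delta^{-1/4+3/(4n)}$ cancels against $\ell^{1/2-1/(2n)}\delta^{-1/(2n)}$, $\ell=\delta^{1/2}$, at the exponent $2n$, not at $\qc$. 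The condition $q>2+2mn/(m+1)$ arises separately, from summing the annuli. For $n=2$ the corresponding tool is Proposition~\ref{prop:square}, applied once per normalized annulus.

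Two smaller points. First, Palle's exterior estimate gives $q\ge2+2mn/(m+1)$, not $q>2n$. Indeed, the box $\{\delta/2\le\xi_2\le\delta,\ \abs{\xi_1}\le(\delta/4N)^{1/m}\}$ lies in your region $\abs{\xi_2-\xi_1^m}>N\abs{\xi_1}^m$ and already forces that condition, which exceeds $2n$ when $n\le m$; the correct range still suffices. Second, your test box with fixed $\xi_1$-width $c$ fails for the perturbed phase, since $R$ is not $O(\delta^n)$ on it. Let that width shrink with $\delta$ (the family $\abs u\le r$, $\abs v\le r^M$, $M\to\infty$), or cite the Ikromov--M\"uller necessity theorem.
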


To relate $\qc$ to the Newton geometry, let
$d(\phi)$, $\hlin(\phi)$, $h(\phi)$, and $h^r(\phi)$
denote the Newton distance, linear height, height, and
restriction height, respectively, with the conventions
of \cite{IMbook,IMI}.
Since a flat term remains flat under smooth coordinate
changes fixing the origin, these quantities agree with
those of the unperturbed model. Thus
\begin{equation}\label{eq:newtondata}
 d(\phi)=\hlin(\phi)=\frac{mn}{m+1}=:d,
 \qquad
 h(\phi)=n,
 \qquad
 h^r(\phi)=\max\{d,n-1\},
\end{equation}
see \cite[Example~1.6 and Proposition~2.1]{IMI}.

For necessity, suppose that the restriction inequality in
	\eqref{eq:mainrestriction} holds for some $p\geq1$ and
	$\eta(\Gamma_\phi(0))>0$.
	Since $\partial_2^n\phi(0)=n!$, the phase is of finite type.
	The smooth necessity theorem \cite[Theorem~1.8]{IMI},
	which does not require Condition~\textup{(R)}, therefore gives
	\[
	p'\geq 2h^r(\phi)+2
	=\max\left\{2+\frac{2mn}{m+1},\,2n\right\}
	=\qc,
	\]
	where we used \eqref{eq:newtondata}.
	Equivalently, $p\leq\pc$.


\subsection{Extensions to more general phases}

\begin{theorem}
	\label{thm:extension-smooth-root}
	Let $m,n\ge2$ be integers. Suppose that
	\[
	a(0,0)\ne0,\qquad
	\psi(t)=ct^m+O(t^{m+1}),\qquad c\ne0,
	\]
	and that $R$ is flat at the origin. Define
	\[
	\phi(\xi_1,\xi_2)
	=
	a(\xi_1,\xi_2)
	\bigl(\xi_2-\psi(\xi_1)\bigr)^n
	+R(\xi_1,\xi_2).
	\]
Then there exists a sufficiently small bounded open neighborhood
	$U$ of the origin such that
	\[
	\|\mathcal E_{\phi,U}g\|_{L^q(\mathbb R^3)}
	\le C_q\|g\|_{L^2(U)}
	\qquad
	\bigl(q_c<q\le\infty\bigr).
	\]
	
	Let $S$ be the graph of $\phi$ over $U$. For every
	nonnegative $\eta\in C_c^\infty(S)$,
	\[
	\left(\int_S|\widehat f|^2\,d\mu\right)^{1/2}
	\le C_{p,\eta}\|f\|_{L^p(\mathbb R^3)}
	\qquad
	\left(
	1\le p<p_c
	\right),
	\]
where $\dd\mu=\eta\dd\sigma$ and $\dd\sigma$ denotes surface measure.	If $\eta(\Gamma_{\phi}(0))>0$, a necessary condition is
	$p\le p_c$.
\end{theorem}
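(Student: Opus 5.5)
The plan is to reduce Theorem~\ref{thm:extension-smooth-root} to Theorem~\ref{thm:main} (or, more precisely, to its proof) by a smooth change of coordinates and a normalization of the unit factor $a$. First, replace $\phi$ by $\phi/a(0,0)$ if needed; multiplying the phase by a nonzero constant $\lambda$ only rescales the third variable, since $\E_{\lambda\phi,U}g(x)=\E_{\phi,U}g(x_1,x_2,\lambda x_3)$, so the $L^q$ norm changes by the factor $\abs{\lambda}^{-1/q}$. We may therefore assume $a(0,0)=1$, hence $a>0$ near the origin. Next, the factor $a$ is absorbed into the root. For $n\ge2$ write
\[
a(\xi)\bigl(\xi_2-\psi(\xi_1)\bigr)^n=\Bigl(a(\xi)^{1/n}\bigl(\xi_2-\psi(\xi_1)\bigr)\Bigr)^n .
\]
We then use the local diffeomorphism $y_1=\xi_1$, $y_2=a(\xi)^{1/n}(\xi_2-\psi(\xi_1))+\psi(\xi_1)$. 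Its Jacobian at $0$ is the identity, because $\psi'(0)=0$ and $a(0)=1$. In these coordinates the principal term is $(y_2-\psi(y_1))^n$, and $R$ stays flat, because flatness is preserved under smooth diffeomorphisms fixing the origin.

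Second, normalize the curve $\psi(t)=ct^m+O(t^{m+1})$. The substitution $y_1=\lambda t$ sends $c$ to $\pm1$, and in fact only its nonvanishing matters. Since $m\ge2$ the curve is still curved at the origin, and this is the only property the local estimates use. I would \emph{not} try to straighten $\psi$ to exactly $t^m$ by a change of variables: the map $t\mapsto t(1+O(t))^{1/m}$ is not smooth across the origin when $c$ has the wrong sign or $m$ is even. The cleaner route is to re-run the proof of Theorem~\ref{thm:main} with $x_1^m$ replaced by $\psi$. The argument there has three parts: (i) Palle's estimate away from the root; (ii) the uniform local estimates near $\xi_2=\psi(\xi_1)$, namely decoupling for the planar curves $\xi_2=\psi(\xi_1)+\text{const}$ combined with the one-dimensional finite-type estimate in the transversal variable for $n\ge3$, and the $C^4$ perturbation estimate for $n=2$; and (iii) anisotropic rescaling $\xi_1=\delta t$, $\xi_2-\psi(\xi_1)=\delta^m s$. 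Each of these depends only on $\psi$ through the rescaled curves $\delta^{-m}\psi(\delta t)=ct^m+O(\delta)$. These converge in $C^N$ to $ct^m$, which has curvature bounded below on dyadic annuli $\abs{t}\sim1$. Hence every constant is uniform in $\delta$, and the dyadic summation over $\delta$ goes through exactly as before.

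Third, the change of variables must be compatible with the extension operator. A diffeomorphism $\Phi$ in $\xi$ changes $\E_{\phi,U}$ into an operator with phase $x_1\Phi_1(y)+x_2\Phi_2(y)+x_3\tilde\phi(y)$, not $x\cdot\Gamma_{\tilde\phi}(y)$. So rather than literally transforming the operator, I would apply the coordinate change only inside the decomposition. The pieces of the decomposition are defined by the sets $\abs{\xi_2-\psi(\xi_1)}\sim\delta^m\abs{\cdot}$, and on each rescaled piece the new phase $a\,(\xi_2-\psi)^n+R$ is a smooth, small $C^N$ perturbation of $a(0)(\xi_2-\psi)^n$ after rescaling. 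The flat $R$ contributes $O(\delta^{K})$ for every $K$, and $a(\xi)-a(0)$ contributes $O(\delta)$. These are exactly the kinds of perturbations the uniform local estimates of the main proof are designed to absorb, and the Whitney-decomposition argument for $n=2$ tolerates them as well. I expect this step to be the main obstacle: verifying that $a$ enters only through harmless $C^4$-small perturbations at every scale and in the region away from the root, where Palle's Proposition~4.3 must be applicable to the phase $a(\xi_2-\psi)^n$, which I expect to hold since that result is stated for such factored forms.

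Finally, the restriction estimate follows by duality and Hölder's inequality on the compact support of $\eta$, exactly as in Theorem~\ref{thm:main}. For necessity, $\partial_2^n\phi(0)=n!\,a(0)\neq0$, so $\phi$ is of finite type. The Newton data in \eqref{eq:newtondata} are invariant here: the factor $a$ and the higher terms of $\psi$ are removed by the smooth coordinate change above, and $R$ is flat. Therefore \cite[Theorem~1.8]{IMI} gives $p'\ge 2h^r+2=\qc$.
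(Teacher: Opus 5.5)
Your proposal is correct and follows the paper's proof, which treats this theorem as the case $k=0$ of Theorem~\ref{thm:extension-tangential}. The steps match: Palle's exterior estimate off the horn; the dyadic rescaling $\phi_r(u,y)=r^{-mn}\phi(ru,r^my)\to a(0,0)(y-cu^m)^n$ in every $C^N$ (with $a(ru,r^my)-a(0,0)=O(r)$ and $R_r=O(r^K)$); Proposition~\ref{prop:strip} around the implicit-function curve $\partial_y^{n-1}\phi_r=0$ for $n\ge3$, or Proposition~\ref{prop:square} after affine patching for $n=2$; and summation with exponent $\frac{m+1}{2}-\frac{mn+m+1}{q}$.

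The differences are minor. For necessity you invoke the Ikromov--M\"uller necessity theorem, as the paper does for Theorem~\ref{thm:main}, whereas the paper's extension section uses an explicit adapted-box test. Note that the Newton data are unchanged because the adapted phase still has Newton polyhedron $(0,n)+\R_+^2$, not because your coordinate change removes the higher terms of $\psi$; it does not. Also, your aside that $t\mapsto t(1+O(t))^{1/m}$ is not smooth is false: it is smooth near $0$ for every $m$ and every sign of $c$. You never use that claim, so it does not affect the argument.
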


The same method permits a prescribed tangential vanishing
factor in the coefficient.

\begin{theorem}
	\label{thm:extension-tangential}
	Let $m,n\ge2$ and $0\le k<n$ be integers. Suppose that
	\[
	a(0,0)\ne0,\qquad
	\psi(t)=ct^m+O(t^{m+1}),\qquad c\ne0,
	\]
	and that $R$ is flat at the origin. Define
	\[
	\phi(\xi_1,\xi_2)
	=
	\xi_1^k a(\xi_1,\xi_2)
	\bigl(\xi_2-\psi(\xi_1)\bigr)^n
	+R(\xi_1,\xi_2),
	\]
	and
	\[
d_k=\frac{k+mn}{m+1},
	\qquad
q_k=\max\{2+2d_k,\,2n\}.
	\]
Then	there exists a sufficiently small bounded open neighborhood
	$U$ of the origin such that
	\[
	\|\mathcal E_{\phi,U}g\|_{L^q(\mathbb R^3)}
	\le C_q\|g\|_{L^2(U)}
	\qquad
	\bigl(q_k<q\le\infty\bigr).
	\]
	For every nonnegative $\eta\in C_c^\infty(S)$, where $S$
	is the graph of $\phi$ over $U$, restriction holds for
	\[
	1\le p<\frac{q_k}{q_k-1}.
	\]
	If $\eta(\Gamma_{\phi}(0))>0$, it is necessary that
	\[
	p\le\frac{q_k}{q_k-1}.
	\]
	Thus the asserted range is optimal away from its critical
	exponent.
\end{theorem}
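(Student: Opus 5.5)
We plan to reduce Theorem~\ref{thm:extension-tangential} to the machinery used for Theorem~\ref{thm:main} and Theorem~\ref{thm:extension-smooth-root}. The factor $\xi_1^k$ changes the Newton polyhedron, but only through the principal face. The first step is a smooth change of variables $y_2=\xi_2-\psi(\xi_1)$, $y_1=\xi_1$. This is a diffeomorphism fixing the origin, so it keeps $R$ flat and preserves the $L^2\to L^q$ extension bounds up to constants. It turns the phase into $y_1^k a(y)y_2^n+\tilde R(y)$ with $\tilde R$ flat. The phase is no longer of finite type along $y_1=0$ if $k\ge1$ unless $n$ contributes. Since $\partial_{y_2}^n$ of the phase at $0$ is $n!\,y_1^k a$ plus flat terms, however, the original $\phi$ in $\xi$ coordinates satisfies $\partial_2^n\phi(0)=0$ only when $k>0$. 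Finite type is instead read off from the pure term: $\phi(\xi_1,0)=\xi_1^k a(\xi_1,0)(-c\xi_1^m)^n+\dots$, of order $k+mn$.

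The second step is to compute the Newton data in the original coordinates. The principal face is the segment joining $(k,n)$ and $(k+mn,0)$. Its bisectrix intersection gives $d=\frac{k+mn}{m+1}=d_k$. The adapted height is $h=\max\{k,n\}$ in the $y$-coordinates, though the relevant quantity is the one appearing in $2n$. As in \eqref{eq:newtondata} and \cite[Proposition~2.1]{IMI}, we would verify $h^r(\phi)=\max\{d_k,n-1\}$, so that $2h^r+2=q_k$. Necessity then follows exactly as for Theorem~\ref{thm:main}, from \cite[Theorem~1.8]{IMI}, which needs no Condition~(R), together with the flat invariance of all Newton quantities.

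For sufficiency we follow the scheme of the main proof. We dyadically decompose in $\xi_1\sim 2^{-j}$ and apply the anisotropic rescaling $\xi_1=2^{-j}u_1$, $\xi_2-\psi(\xi_1)=2^{-jm}u_2$. Away from the root region $|u_2|\gg1$, the rescaled phase is non-degenerate of finite type, and Palle's estimate \cite[Proposition~4.3]{Palle} applies as before. Near the root, the rescaled phase is $2^{-j(k+mn)}\bigl(u_1^k a\,u_2^n+2^{j(k+mn)}\tilde R\bigr)$ with $u_1\sim1$. Since $\tilde R$ is flat, the correction $2^{j(k+mn)}\tilde R(2^{-j}u_1,\ldots)$ is $O(2^{-jN})$ in $C^4$. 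Because $u_1^k a$ is smooth and nonvanishing on $u_1\sim1$, it can be absorbed: the rescaled phase is a uniform perturbation of $b(u)u_2^n$ with $b\ne0$. For $n\ge3$ we use the decoupling and one-dimensional finite-type argument, and for $n=2$ the uniform $C^4$ perturbation estimate. Both give bounds uniform in $j$ for exponents $q>2n$ in the root region and $q>2$ elsewhere.

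Undoing the scaling, the $j$-th piece has norm $\lesssim 2^{-j(1+m)/2}\,2^{j(1+m+k+mn)/q}$ times the uniform constant. This follows from the Jacobian $2^{-j(1+m)}$ in $L^2$ and the dilation $2^{j}$, $2^{jm}$, $2^{j(k+mn)}$ in $x$. The exponent is negative exactly when $q>2+2\frac{k+mn}{m+1}=2+2d_k$. Together with $q>2n$ for the local pieces, summation over $j$ converges for $q>q_k$. The endpoint $q=\infty$ is trivial.

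The main obstacle we expect is the root region. There, the factor $u_1^k$ must not spoil the uniformity of the local estimates. One has to check that the proofs for $n\ge3$ and $n=2$ only used nonvanishing of the smooth coefficient and $C^4$-closeness, not the specific form $(\xi_2-\xi_1^m)^n$. We would re-run those arguments with a general coefficient $b(u)$ bounded away from zero on the annulus.
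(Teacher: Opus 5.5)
\textbf{There is a genuine gap.} Your overall architecture matches the paper: Palle's exterior estimate, dyadic annuli in $\xi_1$, uniform local estimates near the root, and summation with exponent $\frac{m+1}{2}-\frac{m+1+k+mn}{q}$, which is positive iff $q>2+2d_k$. That scaling bookkeeping is correct. The gap is your first step. The map $(\xi_1,\xi_2)\mapsto(\xi_1,\xi_2-\psi(\xi_1))$ does \emph{not} preserve $L^2\to L^q$ extension bounds.

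Used only to reparametrize the same surface, it changes nothing. Replacing the surface by the graph of $y_1^ka\,y_2^n+\tilde R$ over the new variables, however, means applying the nonlinear ambient map $(\xi_1,\xi_2,\xi_3)\mapsto(\xi_1,\xi_2-\psi(\xi_1),\xi_3)$. Extension estimates transfer only under affine ambient maps; this is exactly the caveat in the paper's closing remark.

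This is not a technicality. Take $k=0$, $a\equiv1$, $R\equiv0$. The straightened graph is then the cylinder over $y_2^n$, and Knapp boxes $\{\abs{y_1}\le\rho,\ \abs{y_2}\le\delta\}$ with $\delta\to0$ show that its extension estimate fails for $q<2n+2$. Yet $q_k<2n+2$ because $d_k<n$. So any argument routed through the straightened surface loses precisely the range $q_k<q<2n+2$ that the theorem is about.

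The same error breaks your root-region step. A uniform perturbation of $b(u)u_2^n$, with $(u_1,u_2)$ treated as frequency coordinates, has the straight line $u_2=0$ as the zero set of $\partial_2^{n-1}$. Two consequences follow:
\begin{itemize}
\item The hypothesis $\abs{A''}\ge c_0$ of Proposition~\ref{prop:strip} fails. The decoupling gain $\delta^{-1/4+3/(4n)}$ that cancels the short-piece factor $\delta^{1/4-3/(4n)}$ is then unavailable. Taking $\ell\approx1$ in \eqref{eq:capbound}, the shell sum converges only for $q>2n+2$, which is sharp for that model.
\item Proposition~\ref{prop:square} is a statement about perturbations of $(y-u^2)^2$, and the parabolic root is essential there.
\end{itemize}
Your rescaling $\xi_2-\psi(\xi_1)=2^{-jm}u_2$ is harmless only if you read it as a reparametrization and then return to the graph variable $y=u_2+\psi_r(u_1)$.

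The repair, which is what the paper does, is the \emph{linear} dilation $\xi_1=ru$, $\xi_2=r^my$, $\xi_3=r^{k+mn}z$. This gives $\phi_r(u,y)=u^ka(ru,r^my)(y-\psi_r(u))^n+R_r(u,y)$, which converges to $a(0,0)u^k(y-cu^m)^n$ in every $C^N$. With this normalization:
\begin{itemize}
\item For $n\ge3$, the curve $A_r$ converges to $cu^m$ with $\abs{A_r''}$ bounded below, so Proposition~\ref{prop:strip} applies directly. A negative sign of $\partial_y^n\phi_r$ is removed by reflecting $x_3$.
\item For $n=2$, you need not re-run the weighted convolution lemma with a variable coefficient, whose proof uses the explicit unperturbed $G_0$. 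On patches of width $h$, the affine map with $z=b(u_0)\gamma^2h^4\zeta$ makes the coefficient $1+O(h)$ in $C^4$. This reduces to Proposition~\ref{prop:square} verbatim.
\end{itemize}

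Smaller points:
\begin{itemize}
\item Invoking Palle's exterior estimate ``as before'' skips the check that the original coordinates are linearly adapted with $h_{\mathrm{lin}}=d_k$. With the factor $\xi_1^k$ this is not automatic; the paper uses $d_k-\frac{k+n}{2}>0$.
\item Your necessity argument rests on the unverified value $h^r(\phi)=\max\{d_k,n-1\}$. The paper instead uses direct Knapp boxes $\{\abs{\xi_1}\le r,\ \abs{\xi_2-\psi(\xi_1)}\le r^M\}$. These give $q\ge 2(k+Mn+m+1)/(M+1)$, hence $q\ge2+2d_k$ at $M=m$ and $q\ge 2n$ as $M\to\infty$.
\item Uniform bounds for $q>2$ away from the root are not available. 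You do not need them, since the exterior is handled in the original coordinates.
\end{itemize}
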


\begin{corollary}
	\label{cor:extension-defining-function}
	Suppose that
	\[
	F(0,0)=0,\qquad
	\partial_{2}F(0,0)\ne0.
	\]
	Let the local zero set of $F$ be $\xi_2=\psi(\xi_1)$, and
	assume that
	\[
	\psi(t)=ct^m+O(t^{m+1}),\qquad c\ne0,\quad m\ge2.
	\]
	If $a(0,0)\ne0$, $R$ is flat at the origin, and
	$n\ge2$, $0\le k<n$ are integers, then
	\[
	\phi(\xi)=\xi_1^ka(\xi)F(\xi)^n+R(\xi)
	\]
	satisfies all conclusions of
	Theorem~\ref{thm:extension-tangential}.
\end{corollary}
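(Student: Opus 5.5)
The plan is to reduce Corollary~\ref{cor:extension-defining-function} to Theorem~\ref{thm:extension-tangential} by factoring $F$ through the implicit function theorem. Since $F(0,0)=0$ and $\partial_2F(0,0)\ne0$, the implicit function theorem gives a smooth $\psi$ with $\psi(0)=0$ whose graph $\xi_2=\psi(\xi_1)$ is the local zero set of $F$. By assumption $\psi(t)=ct^m+O(t^{m+1})$ with $c\ne0$.

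Next I would write
\[
F(\xi_1,\xi_2)=F(\xi_1,\xi_2)-F(\xi_1,\psi(\xi_1))
=(\xi_2-\psi(\xi_1))\,b(\xi_1,\xi_2),
\]
where
\[
b(\xi_1,\xi_2)=\int_0^1\partial_2F\bigl(\xi_1,\psi(\xi_1)+s(\xi_2-\psi(\xi_1))\bigr)\dd s .
\]
Here $b$ is smooth near the origin and $b(0,0)=\partial_2F(0,0)\ne0$. Consequently
\[
\phi(\xi)=\xi_1^k\,\tilde a(\xi)\,(\xi_2-\psi(\xi_1))^n+R(\xi),
\qquad \tilde a:=a\,b^n .
\]
The new coefficient $\tilde a$ is smooth with $\tilde a(0,0)=a(0,0)b(0,0)^n\ne0$, and $R$ is unchanged and still flat at the origin. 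So $\phi$ has exactly the form treated in Theorem~\ref{thm:extension-tangential}, with the same integers $m,n,k$ and hence the same $d_k$ and $q_k$.

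Applying Theorem~\ref{thm:extension-tangential} then yields the conclusions:
\begin{enumerate}[label=(\roman*)]
\item a neighborhood $U$ on which the extension estimate holds for $q_k<q\le\infty$;
\item the restriction estimate for $1\le p<q_k/(q_k-1)$;
\item the necessity of $p\le q_k/(q_k-1)$ when $\eta(\Gamma_\phi(0))>0$.
\end{enumerate}
Only one point needs care. The factorization is valid only on a neighborhood $V$ where the implicit function theorem applies and $b\ne0$. I would choose $U\subset V$ small enough to satisfy both this and the smallness required by Theorem~\ref{thm:extension-tangential}; shrinking $U$ preserves all the estimates.

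The main obstacle is merely bookkeeping: check that $\psi$ and $b$ are $C^\infty$, and make sure the neighborhood on which Theorem~\ref{thm:extension-tangential} applies to the factored phase lies inside the domain of the factorization.
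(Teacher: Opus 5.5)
Your proposal is correct and matches the paper's proof. The paper likewise uses the implicit function theorem and the fundamental theorem of calculus to write $F=(\xi_2-\psi(\xi_1))B_F$ with $B_F(0,0)=\partial_{2}F(0,0)\ne0$, absorbs $B_F^n$ into the coefficient $a$, and applies Theorem~\ref{thm:extension-tangential}; your choice of $U$ inside the domain of the factorization is bookkeeping that the paper leaves implicit.
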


\subsection{Structure of the proof}
Section~\ref{sec:strip} proves the curved-strip estimate for $n\geq3$.
On a transverse shell of width $\delta$, the $TT^*$ argument gives an $L^2\to L^{2n}$ bound with loss $\delta^{-1/(2n)}$.
Restricting the tangential parameter to intervals of length $\delta^{1/2}$ yields the factor $\delta^{1/4-3/(4n)}$,
which cancels the principal planar decoupling loss.
Interpolation with the shell's $L^\infty$ bound then permits summation for every $q>2n$.

Section~\ref{sec:square} treats $n=2$ by a different argument.
A transverse slope coordinate and a bounded convolution density
with weight $\abs{u_1-u_2}$ give the required bilinear estimates.
The weighted bound follows by minimizing in a strictly convex
variable and using evenness and the substitution $w=s^2$.
Parabolic Whitney pairs contribute $\delta^{3-14/q}$, so their
sum converges for $q>14/3$. All estimates are uniform under
small $C^4$ perturbations of $(y-u^2)^2$.

Section~\ref{sec:global} uses Palle's exterior estimate
\cite[Proposition~4.3]{Palle} away from the principal root.
Near the root, anisotropic dilation reduces each annulus to a fixed
parameter region, where flatness supplies the uniform hypotheses
of the two local estimates. Scaling back and summing require
$q>2+2mn/(m+1)$. For $n=2$ this condition already implies
$q>14/3$; for $n\geq3$ it combines with $q>2n$ to give
\eqref{eq:critical}. The geometric smallness conditions determine
a single neighborhood $U=U(m,n,R)$, independent of $q$.

The proofs of Theorem \ref{thm:extension-smooth-root}, Theorem 
\ref{thm:extension-tangential}
 and Corollary \ref{cor:extension-defining-function} are given in
Section~\ref{sec:extensions}. They use the same uniform
local estimates as the main theorem; the tangential factor
changes the outer scaling exponent.

\subsection{Notation}

We use the Fourier transform
$\widehat f(\xi)=\int e^{-ix\cdot\xi}f(x)\dd x$
and absorb normalization constants into inequalities.
We write $A\lesssim B$ if $A\leq CB$, with dependence on
fixed parameters, domains, and geometric bounds understood;
subscripts indicate additional dependence.
The notation $A\approx B$ means that
$A\lesssim B\lesssim A$.

Unless another measure or weight is explicitly indicated,
$L^p$ norms use Lebesgue measure, and we sometimes omit the domains for simplicity.
For a nonnegative integer $N$, the $C^N$ norm is the maximum
of the supremum norms of all partial derivatives of order at most $N$.
For a function $F=F(z_1,\ldots,z_d)$, we write
$\partial_jF=\frac{\partial F}{\partial z_j}$ and $
 \partial^\alpha F=\partial_1^{\alpha_1}\cdots\partial_d^{\alpha_d}F$.
We also use the variable as subscripts;
for instance, $G_{st}=\partial_1\partial_2G$ for $G=G(s,t)$.

For sets $A,B\subset\R^d$ with $B$ open, $A\Subset B$ means
that $\overline A$ is compact and contained in $B$;
the set $A$ itself need not be open.

\subsection{AI usage}
We used ChatGPT 6.1 for English-language editing and proofreading, as well as to explore possible approaches to parts of the proofs. The principal ideas underlying the results were suggested by ChatGPT 6.1. The authors developed, carefully verified, and rewrote all the mathematical arguments, simplifying most of the proofs in the process. The authors are responsible for this paper.

\section{A uniform curved-strip estimate for \texorpdfstring{$n\geq3$}{n at least 3}}
\label{sec:strip}

The first local estimate concerns a strip around a curved zero set of
$\partial_2^{n-1}\varphi$ for a smooth real-valued phase $\varphi$.
Its proof separates the two parameter
directions. An oscillatory integral estimate controls the transverse
direction, while planar decoupling combines short pieces of the curved
strip. The two scale factors cancel at the exponent $2n$.

\begin{proposition}\label{prop:strip}
Let $n\geq3$, let $I$ be a fixed compact interval, and let $A$
be smooth on a neighborhood of $I$, with
\begin{equation}\label{eq:Aclass}
 \norm{A}_{C^3(I)}\leq M,
 \qquad \abs{A''(\xi_1)}\geq c_0>0\quad(\xi_1\in I).
\end{equation}
For a fixed $c>0$, consider the strip
\[
 U_{c}=\{(\xi_1,\xi_2):\xi_1\in I,\ \abs{\xi_2-A(\xi_1)}\leq c\}.
\]
Suppose that $\varphi$ is real-valued and smooth on a neighborhood
of this strip, and that
\begin{equation}\label{eq:derivativeconditions}
 \partial_2^{n-1}\varphi(\xi_1,A(\xi_1))=0,
 \qquad
 0<c_1\leq\partial_2^n\varphi(\xi_1,A(\xi_1)+v)\leq C_1
\end{equation}
for $\xi_1\in I$ and $\abs v\leq c$. Then, for every
$2n<q\leq\infty$,
\begin{equation}\label{eq:stripconclusion}
 \norm{\E_{\varphi,U_{c}}f}_{L^q(\R^3)}
 \leq C_q\norm{f}_{L^2(U_{c})}.
\end{equation}
The constant depends only on $q,n,I,c,M,c_0,c_1,C_1$.
\end{proposition}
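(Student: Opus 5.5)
We follow the strategy announced in the structure section: decompose the strip into transverse shells around the curve $\xi_2=A(\xi_1)$, estimate each shell by combining an $L^2\to L^{2n}$ oscillatory bound in the transverse direction with planar decoupling along the curve, and then sum the shells by interpolating with $L^\infty$. It is convenient first to change variables $\xi_2=A(\xi_1)+v$. Then $\varphi(\xi_1,A(\xi_1)+v)=:\Phi(\xi_1,v)$ satisfies $\partial_v^{n-1}\Phi(\xi_1,0)=0$ and $\partial_v^n\Phi\approx1$. Taylor expansion in $v$ gives
\[
\partial_v\Phi(\xi_1,v)=b(\xi_1)+\tfrac{1}{(n-1)!}\,\partial_v^n\Phi(\xi_1,\theta v)\,v^{n-1}
\]
up to lower-order terms $\sum_{j\le n-2}b_j(\xi_1)v^j$. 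We dyadically decompose $\abs v\approx\delta$, $\delta=2^{-k}\le c$, writing $U_c=\bigcup_\delta S_\delta$ with $S_\delta=\{\abs{\xi_2-A(\xi_1)}\approx\delta\}$. We reduce to showing
\[
\norm{\E_{\varphi,S_\delta}f}_{L^{2n}}\lesssim_\epsilon \delta^{-\epsilon}\norm f_{L^2},
\qquad
\norm{\E_{\varphi,S_\delta}f}_{L^\infty}\lesssim\delta^{1/2}\norm f_{L^2}.
\]
The second bound is Cauchy--Schwarz, since $\abs{S_\delta}\approx\delta$. Interpolation then gives $\delta^{1/2-n/q-\epsilon}$ for $q>2n$, which is summable in $\delta$. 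The region $\abs v\le\delta_0$ for the smallest scale is handled the same way.

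For the $L^{2n}$ bound on a shell, we partition $I$ into intervals $J$ of length $\delta^{1/2}$ and let $\theta_J=S_\delta\cap\{\xi_1\in J\}$. The projection to the $(\xi_1,\xi_2)$ plane of $\theta_J$ lies in a $\delta^{1/2}\times\delta$ box around the curve $\xi_2=A(\xi_1)$, which has curvature $\approx c_0$ by \eqref{eq:Aclass}. These are exactly the caps for planar decoupling at scale $\delta$. Freezing $x_3$, we view $\E f(\cdot,x_3)$ as a function with Fourier support in the $\delta$-neighbourhood of the parabola-like curve. Bourgain--Demeter $\ell^2$ decoupling for curves in $L^p(\R^2)$ with $p=2n\ge6$ (beyond the critical exponent $6$) gives
\[
\norm{\E f(\cdot,x_3)}_{L^{2n}(\R^2)}\lesssim_\epsilon \delta^{-\epsilon}\,\delta^{-(\frac14-\frac{3}{2\cdot 2n})}\Big(\sum_J\norm{\E f_J(\cdot,x_3)}_{L^{2n}}^2\Big)^{1/2}.
\]
The loss in the supercritical range is $N^{1/2-3/p}$ with $N=\delta^{-1/2}$ caps, i.e.\ $\delta^{-(1/4-3/(4n))}$. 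Integrating in $x_3$ and using Minkowski ($2n\ge2$) gives the same inequality in $L^{2n}(\R^3)$. There is a subtle point here: decoupling needs the Fourier support in a fixed neighbourhood, which holds for each $x_3$ slice since the slice's Fourier support is the projection.

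It remains to prove the single-cap estimate
\[
\norm{\E f_J}_{L^{2n}(\R^3)}\lesssim \delta^{\frac14-\frac{3}{4n}}\norm{f_J}_{L^2},
\]
so that the principal factors cancel exactly. For this we use $TT^*$ in the transverse variable combined with the tangential localisation. For fixed $\xi_1$, the one-dimensional phase $v\mapsto \xi_2x_2+x_3\Phi(\xi_1,v)$ on $\abs v\approx\delta$ has $\abs{\partial_v^n\Phi}\approx1$. The one-dimensional finite-type extension estimate (van der Corput plus $TT^*$/Hardy--Littlewood--Sobolev) then gives $L^2_v\to L^{2n}_{x_2,x_3}$ with loss $\delta^{-1/(2n)}$: measured against an interval of length $\delta$, the operator behaves like a degenerate curve of type $n$. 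In the $\xi_1$ direction, on an interval of length $\delta^{1/2}$, we use only Plancherel/Cauchy--Schwarz and Bernstein, gaining $\abs J^{\,1/2-1/(2n)}=\delta^{1/4-1/(4n)}$ after the required mixed-norm Minkowski step in $x_1$. The bookkeeping $-\tfrac1{2n}+\tfrac14-\tfrac1{4n}=\tfrac14-\tfrac3{4n}$ matches.

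The main obstacle is making this single-cap bound rigorous. The phase couples $\xi_1$ and $v$ through $A(\xi_1)$ and through the $\xi_1$-dependence of $\Phi$. We would handle it by a Galilean/parabolic affine change on each cap, subtracting the tangent line of $A$ at the centre of $J$ (which preserves $L^q$ norms) so that the residual $\xi_1$-variation is $O(\delta)$ in $\xi_2$ over $J$. We then treat the cap as a tensor-type product with bounded error, using that the one-dimensional estimate holds uniformly under $C^n$-small perturbations with $\partial_v^n\approx1$, with all constants depending only on $n,I,c,M,c_0,c_1,C_1$.
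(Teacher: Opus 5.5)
Your overall route is the paper's: the unit-Jacobian change $\xi_2=A(\xi_1)+v$, one-sided dyadic shells, a fiberwise $TT^*$ bound $L^2_v\to L^{2n}_{x_2,x_3}$ with loss $\delta^{-1/(2n)}$, and Bernstein/Plancherel/Minkowski in $x_1$ over intervals of length $\delta^{1/2}$. It continues with planar decoupling whose loss $\delta^{-1/4+3/(4n)-\varepsilon}$ cancels exactly, and interpolation with the $L^\infty$ bound $\delta^{1/2}$ to sum the shells.

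\textbf{The gap: the fiber estimate.} As you justify it, this step does not work. You invoke $\abs{\partial_v^n\Phi}\approx1$ (``a degenerate curve of type $n$''). Van der Corput then gives only $\abs{K_\lambda}\lesssim\min(\delta,\abs\lambda^{-1/n})$. After interpolation with Plancherel, the $L^{(2n)'}\to L^{2n}$ bound is $\abs\lambda^{-(n-1)/n^2}$ for large $\abs\lambda$. This decays too slowly for Hardy--Littlewood--Sobolev in $x_3$, which needs $\abs\lambda^{-1/n}$. Order-$n$ information is what yields $L^{2n+2}$ for a type-$n$ curve, not $L^{2n}$.

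The missing step is where the hypothesis $\partial_2^{n-1}\varphi(\xi_1,A(\xi_1))=0$ is actually used. One has $\partial_v^{n-1}\Phi(u,v)=v\int_0^1\partial_2^n\varphi(u,A(u)+\theta v)\dd\theta$, so $\abs{\partial_v^{n-1}\Phi}\approx\delta$ on a one-sided shell. Since $n-1\geq2$, the term linear in $v$ disappears, and van der Corput at order $n-1$ gives $(\delta\abs\lambda)^{-1/(n-1)}$. Interpolation then gives $\delta^{-1/n}\abs\lambda^{-1/n}$, and HLS yields $\norm{T_uT_u^*}\lesssim\delta^{-1/n}$. So on a shell the fiber curve is of type at most $n-1$, with $(n-1)$-st derivative of size $\delta$, not of type $n$.

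\textbf{The ``main obstacle'' is not there.} The Galilean change plus a ``tensor-type product with bounded error'' in your last paragraph is vague and unnecessary. Write $\E_{\delta,Q}g(x)=\int_Qe^{ix_1u}T_u[g(u,\cdot)](x_2,x_3)\dd u$. For each fixed $(x_2,x_3)$ this is a function of $x_1$ with Fourier support in $Q$, because the first coordinate of the surface is exactly $u$, however $A$ and $\Phi$ couple $u$ and $v$. Bernstein and Plancherel in $x_1$, followed by Minkowski, give $\norm{\E_{\delta,Q}g}_{2n}\lesssim\ell^{1/2-1/(2n)}\bigl(\int_Q\norm{T_u[g(u,\cdot)]}_{2n}^2\dd u\bigr)^{1/2}$. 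The fiber bound, uniform in $u$, finishes the single-cap estimate. The argument you sketched in your preceding paragraph is therefore already complete.

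\textbf{Decoupling uniformity.} You only assert that the decoupling constant is uniform over the admissible curves $A$; this needs justification. The paper uses a uniform $L^6$ statement for $n=3$. For $n>3$ it interpolates that with the trivial $\ell^2(L^\infty)\to L^\infty$ bound $(\#Q)^{1/2}\lesssim\delta^{-1/4}$.
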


\begin{proof}
The change of variables $\xi_1 =u, \xi_2=A(u)+v$ has unit Jacobian.
Set $\Phi(u,v)=\varphi(u,A(u)+v) = \varphi(\xi_1,\xi_2)$ and
$g(u,v)=f(u,A(u)+v)$. A simple calculation yields that $\norm g_{L^2(I\times[-c,c])}=\norm f_{L^2(U_c)}$ and
\begin{equation}\label{eq:stripoperator}
 \begin{aligned}
 \E g(x)
 &:=\int_I\int_{-c}^{c}
 e^{i[x_1u+x_2(A(u)+v)+x_3\Phi(u,v)]}g(u,v)\dd v\dd u\\
 &=\int_{U_c}
 e^{i[x_1\xi_1+x_2\xi_2+x_3\varphi(\xi_1,\xi_2)]}f(\xi_1,\xi_2)\dd\xi_1\dd\xi_2
 =\E_{\varphi,U_c}f(x).
 \end{aligned}
\end{equation}
The case $q=\infty$ follows from Cauchy--Schwarz inequality. 
Now we fix $2n<q<\infty$. On each side of $v=0$, decompose the transverse
interval into shells $J_\delta$ with
$\delta/2<\abs v\leq\delta$, where $\delta=2^{-j}c$,
$j\geq0$. Let $\E_\delta$ denote the restriction of
\eqref{eq:stripoperator} to $I\times J_\delta$.
We first establish an $L^{2n}$ bound on the shell estimate
with an arbitrarily small loss $\delta^{-\varepsilon}$.

\subsubsection*{1. The transverse estimate}

For each $u\in I$, define the operator $T_u$ as
\[
 (T_u h)(x_2,x_3)
 :=
 \int_{J_\delta}
 e^{i[x_2(A(u)+v)+x_3\Phi(u,v)]}h(v)\dd v,
 \qquad h\in L^2(J_\delta).
\]
Then
\[
 \E_\delta g(x)
 =
 \int_I e^{ix_1u}T_u[g(u,\cdot)](x_2,x_3)\dd u.
\]
We will prove 
\begin{equation}\label{eq:fiberbound}
 \norm{T_u[g(u,\cdot)]}_{L^{2n}(\R^2)}
 \lesssim\delta^{-\frac{1}{2n}}\norm{g(u,\cdot)}_{L^2(J_\delta)}
\end{equation}
 uniformly in $u$
by a $T_uT_u^*$ argument.
For fixed $u$, the kernel of $T_uT_u^*$ is
$K_{x_3-x_3'}(x_2-x_2')$. Writing $\lambda=x_3-x_3'$, set
\[
 K_\lambda(x_2)=\int_{J_\delta}  e^{i[x_2(A(u)+v)+\lambda\Phi(u,v)]}\dd v.
\]
The fundamental theorem of calculus gives
\begin{equation}\label{eq:rootidentity}
 \partial_2^{n-1}\Phi(u,v)
 =v\int_0^1\partial_2^n\varphi(u,A(u)+\theta v)\dd\theta.
\end{equation}
Thus, from \eqref{eq:derivativeconditions}, we have $\abs{\partial_2^{n-1}\Phi}\approx\delta$ on a single-sided
shell. 
Since $n-1\geq2$, differentiation removes the term linear in $v$, 
and the $(n-1)$-th derivative of the phase function in $K_\lambda$ has modulus
$\abs\lambda\,\abs{\partial_2^{n-1}\Phi(u,v)} \gtrsim\delta\abs\lambda$.

For $\lambda\neq0$, van der Corput lemma \cite[Chapter~VIII]{Stein} gives
\[
 \norm{K_\lambda*H}_\infty \lesssim(\delta\abs\lambda)^{-1/(n-1)}\norm H_1.
\]
With the Fourier convention of Section~1.4, the multiplier is
\[
 \widehat K_\lambda(\xi_2)
 =2\pi e^{i\lambda\Phi(u,\xi_2-A(u))}
          \mathbf1_{A(u)+J_\delta}(\xi_2).
\]
The expression is extended by zero outside the interval $A(u)+J_\delta$. Its modulus
is uniformly bounded, so Plancherel's theorem gives
$\norm{K_\lambda*H}_2\lesssim\norm H_2$.
Interpolating these two estimates, we have
\[
 \norm{K_\lambda*H}_{L^{2n}_{x_2}}
 \lesssim \delta^{-1/n}\abs\lambda^{-1/n}
                 \norm H_{L^{(2n)'}_{x_2}}.
\]
For $G\in C_c^\infty(\R^2)$, Minkowski inequality therefore gives
\[
 \norm{T_uT_u^*G(\cdot,x_3)}_{L^{2n}_{x_2}}
 \lesssim\delta^{-1/n}
 \int_\R\abs{x_3-x_3'}^{-1/n}\norm{G(\cdot,x_3')}_{L^{(2n)'}_{x_2}}\dd x_3'.
\]
Since $\frac{1}{2n}=\frac{1}{(2n)'}-(1-\frac{1}{n})$, 
the one-dimensional Hardy--Littlewood--Sobolev inequality \cite{SteinWeiss} yields
\[
 \norm{T_uT_u^*G}_{2n}\lesssim\delta^{-1/n}\norm G_{(2n)'}.
\]
Next, a simple calculation gives
\[
 \norm{T_u^*G}_2^2=\langle T_uT_u^*G,G\rangle \lesssim\delta^{-1/n}\norm G_{(2n)'}^2.
\]
Taking square roots and using duality proves \eqref{eq:fiberbound}. 

\subsubsection*{2. Short tangential pieces and decoupling}
For an interval $Q\subset I$ of length $\ell$, write
\[
 g_Q=g\mathbf1_{Q\times J_\delta},
 \qquad
 \E_{\delta,Q}g(x)=\int_Qe^{ix_1u} T_u[g(u,\cdot)](x_2,x_3)\dd u.
\]

For each fixed $(x_2,x_3)$, the Fourier support in $x_1$ of
$\E_{\delta,Q}g$ is contained in the interval $Q$,
whose length is $\ell$.
Applying the Bernstein's inequality yields that
\begin{equation*}
      \norm{\E_{\delta,Q}g}_{L^{2n}_{x}} \lesssim
 \ell^{\frac{1}{2}-\frac{1}{2n}}
 \left\| \norm{\E_{\delta,Q}g(\cdot,x_2,x_3)}_{L^2_{x_1}} \right\|_{L^{2n}_{x_2,x_3}}.
\end{equation*}
Using the Plancherel theorem
in $x_1$ and the Minkowski inequality, the norm on the right-hand side is bounded by
\begin{equation*}
 \left\|  \left(\int_Q \abs{T_u[g(u,\cdot)](x_2,x_3)}^2\dd u\right)^{\frac{1}{2}} \right\|_{L^{2n}_{x_2,x_3}}
\lesssim
\left( \int_Q \norm{T_u[g(u,\cdot)]}_{L^{2n}_{x_2,x_3}}^2\dd u \right)^{\frac{1}{2}}.
\end{equation*}

Thus,  we have
\begin{equation}\label{eq:capbound}
 \begin{aligned}
 \norm{\E_{\delta,Q}g}_{L^{2n}_{x}} 
 &\lesssim
 \ell^{\frac{1}{2}-\frac{1}{2n}} 
 \left( \int_Q \norm{T_u[g(u,\cdot)]}_{L^{2n}_{x_2,x_3}}^2\dd u \right)^{\frac{1}{2}} \\
&\lesssim
 \ell^{\frac{1}{2}-\frac{1}{2n}}\delta^{-\frac{1}{2n}}
 \left(
   \int_Q \norm{g(u,\cdot)}_{L^2(J_{\delta})}^2\dd u
 \right)^{1/2}
 =
 \ell^{\frac{1}{2}-\frac{1}{2n}}\delta^{-\frac{1}{2n}}\norm{g_Q}_2,
 \end{aligned}
\end{equation}
where the last inequality uses \eqref{eq:fiberbound}.

With $Q=I$, the above result already controls the finitely many shells with $\delta>1$. 
Actually, for $q \in [2n,\infty]$, interpolation the above estimate with the $L^\infty$ bound yields
\begin{equation}\label{eq:largedelta}
      \norm{\E_{\delta,I}g}_{L^{q}(\R^3)} \lesssim \norm{g}_{L^2(I\times J_\delta)}, 
      \quad \text{for}\ \delta>1,\ q\in [2n,\infty].
\end{equation} 
We may henceforth assume $0<\delta\leq1$.

The natural length of $Q$ is $\delta^{1/2}$: over such an interval,
the graph of $A$ differs from its tangent line by $O(\delta)$.
Partition $I$ at this scale, allowing a shorter endpoint interval.
Then \eqref{eq:capbound} becomes
\begin{equation}\label{eq:capbound2}
       \norm{\E_{\delta,Q}g}_{2n}
      \lesssim\delta^{1/4-3/(4n)}\norm{g_Q}_2.
\end{equation}

For each fixed $x_3$, the Fourier transform of $\E_\delta g$ in
$(x_1,x_2)$ is supported in
$\{(\xi_1,\xi_2):\xi_1\in I,\ \abs{\xi_2-A(\xi_1)}\leq\delta\}$.
It is an $L^2$ function with bounded support, so
$\E_\delta g(\cdot,\cdot,x_3)\in L^{2n}(\R^2)$.
The planar case of Bourgain--Demeter decoupling
\cite[Theorem~1.1]{BD} gives
\begin{equation}\label{eq:decouplingapplied}
 \norm{\E_\delta g(\cdot,\cdot,x_3)}_{L^{2n}_{x_1,x_2}}
 \lesssim_\varepsilon
 \delta^{-1/4+3/(4n)-\varepsilon}
 \left(\sum_Q
 \norm{\E_{\delta,Q}g(\cdot,\cdot,x_3)}_{L^{2n}_{x_1,x_2}}^2
 \right)^{1/2}.
\end{equation}

The constant in \eqref{eq:decouplingapplied} is uniform
under \eqref{eq:Aclass}.
For the case $n=3$, this follows from
\cite[Lemma~4.4]{YangUniform}, after normalizing $I$
and choosing a comparable dyadic scale.
For $n >3$, interpolate the corresponding cap summation
operator, using smooth cap cutoffs with uniformly
$L^1$-bounded convolution kernels and bounded overlap, between this $L^6$
bound and the trivial $\ell^2(L^\infty)\to L^\infty$
bound $(\#\{Q\})^{1/2}\lesssim\delta^{-1/4}$.
The interpolation weight $3/n$ gives the stated exponent,
after relabeling the arbitrarily small loss.

Taking the $L^{2n}$ norm in $x_3$, applying Minkowski inequality to the square sum, and using
\eqref{eq:capbound2}, we obtain
\begin{equation}\label{eq:shellcritical}
 \begin{aligned}
 \norm{\E_\delta g}_{2n}
 &\lesssim_\varepsilon
 \delta^{-1/4+3/(4n)-\varepsilon}
 \left(\sum_Q\norm{\E_{\delta,Q}g}_{2n}^2\right)^{1/2}\\
 &\lesssim_\varepsilon
 \delta^{-1/4+3/(4n)-\varepsilon}
 \delta^{1/4-3/(4n)}
 \left(\sum_Q\norm{g_Q}_2^2\right)^{1/2}\\
 &=\delta^{-\varepsilon}\norm g_{L^2(I\times J_\delta)}
 \leq\delta^{-\varepsilon}\norm g_2,
 \end{aligned}
\end{equation}
where the last step uses the disjointness of the intervals $Q$.

\subsubsection*{3. Summing the transverse shells}
The shell has parameter area $O(\delta)$, so
$\norm{\E_\delta g}_\infty\lesssim\delta^{1/2}\norm g_2$.
Interpolating this bound with \eqref{eq:shellcritical} yields
\begin{equation*}
 \norm{\E_\delta g}_q
 \lesssim_{q,\varepsilon}\delta^{\gamma(q,\varepsilon)}\norm g_2,
 \qquad
 \gamma(q,\varepsilon)=\frac12-\frac nq-\frac{2n\varepsilon}{q}.
\end{equation*}
Choose $\varepsilon=(q-2n)/(8n)$. Then $\gamma=(q-2n)/(4q)>0$. 
On either fixed side of $v=0$, let $g_j$ be the
restriction of $g$ to $I\times J_{\delta_j}$.
Then
\[
 \begin{aligned}
 \left\|\sum_{\delta_j\leq1}\E g_j\right\|_q
 \lesssim_q\sum_{\delta_j\leq1}\delta_j^\gamma\norm{g_j}_2
 \leq C_q\left(\sum_{\delta_j\leq1}\delta_j^{2\gamma}\right)^{1/2}
 \left(\sum_j\norm{g_j}_2^2\right)^{1/2}
 \lesssim_q\norm g_2.
 \end{aligned}
\]
The same estimate holds on the other side of $v=0$.
Combining the two sides with the finitely many coarse shells
in \eqref{eq:largedelta}, and using \eqref{eq:stripoperator},
proves \eqref{eq:stripconclusion}.
The shell expansion agrees with the original integral
because $g\in L^1$ on the bounded parameter domain, since
the omitted set $v=0$ has measure zero.
\end{proof}

\begin{remark}
      The proof imposes no additional conditions on the lower
transverse derivatives
$\partial_2^j\varphi(u,A(u))$, $0\leq j\leq n-2$.
The derivative lower bound required for the transverse
oscillatory estimate follows from \eqref{eq:rootidentity}
and the bounds on $\partial_2^n\varphi$, without any
factorization assumption on $\varphi$.
This flexibility will be used for the normalized flat perturbations.
\end{remark}

\section{Uniform stability for the squared parabolic root}
\label{sec:square}

For $n=2$, we estimate products of extensions by convolution.
A coordinate adapted to the transverse slope handles separation
in one parameter direction; a weighted convolution estimate handles
separation in the other. A parabolic Whitney decomposition then
reduces the extension estimate to a convergent sum over scales.
Set
\[
 P_0(u,y)=(y-u^2)^2,
 \qquad \Omega_*=(-3,3)\times(-10,10).
\]
The enlarged domain accommodates the two-point calculations below.
In the relevant smooth normal form of \cite[Theorem~5.1]{Palle},
the flat residual term $b_0$ is required to vanish identically.
The following estimate allows arbitrary small $C^4$ perturbations.

\begin{proposition}\label{prop:square}
There is an absolute $\varepsilon_0>0$ such that, if
\begin{equation}\label{eq:C4small}
 \varphi=P_0+H\quad\text{on }\Omega_*,
 \qquad
 \max_{\abs\alpha\leq4}\norm{\partial^\alpha H}_{L^\infty(\Omega_*)}
 \leq\varepsilon_0,
\end{equation}
where $H$ is real-valued and smooth, then on the patch
\[
 D_0=\{(u,y):\abs u\leq1,\ \abs{y-u^2}\leq1/2\}
\]
one has
\begin{equation*}
 \norm{\E_{\varphi,D_0}f}_{L^q(\R^3)}
 \leq C_q\norm f_{L^2(D_0)},
 \qquad 14/3<q\leq\infty.
\end{equation*}
The constants are uniform over the class \eqref{eq:C4small}.
\end{proposition}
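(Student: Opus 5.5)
We argue bilinearly in $L^{q/2}$, as announced in the structure of the proof. The case $q=\infty$ is Cauchy--Schwarz, and for $q\le 4$ nothing is claimed, so we fix $14/3<q<\infty$. We change variables $y=u^2+v$, so that $\varphi(u,u^2+v)=v^2+\tilde H(u,v)$ with $\tilde H$ small in $C^4$, and the parameter domain becomes $[-1,1]\times[-1/2,1/2]$. Writing $\E f=\E_{\varphi,D_0}f$, we have $\norm{\E f}_q^2=\norm{(\E f)^2}_{q/2}$. We perform a parabolic Whitney decomposition of $D_0\times D_0$ relative to the diagonal, adapted to the anisotropic geometry of $(y-u^2)^2$: the natural boxes at scale $\delta$ have dimensions $\delta\times\delta^2$ in the $(u,v)$ coordinates. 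This gives pairs $(\tau,\tau')$ of such boxes at scale $\delta=2^{-k}$, separated at distance $\approx\delta$ in this quasi-metric, with bounded overlap at each scale. The goal is the bilinear estimate
\[
 \norm{\E f_\tau\,\E f_{\tau'}}_{q/2}\lesssim \delta^{3-14/q}\norm{f_\tau}_2\norm{f_{\tau'}}_2 .
\]
Summing over scales then converges exactly when $q>14/3$. Within a fixed scale, the orthogonality of the products in Fourier space (their supports $\Gamma(\tau)+\Gamma(\tau')$ have bounded overlap, plus the standard $\ell^{r}$ almost-orthogonality for $r=q/2\ge2$) reduces matters to a single pair.

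For a single pair we rescale anisotropically. We translate $\tau$ to the origin and subtract the tangent plane, which changes only the modulation. We then dilate $u\to\delta u$ and $v\to\delta^2v$, so that $(v+\ldots)^2$ scales by $\delta^4$. The rescaled phase is again of the form $P_0+H_\delta$, where $\norm{H_\delta}_{C^4}\lesssim\varepsilon_0$ because the dilation does not increase $C^4$ norms for $\delta\le1$; this is why the class \eqref{eq:C4small} is scale-invariant. The Jacobian factors give $\delta^{3-14/q}$ from the $L^2$ normalization ($\delta^{3}$ in parameter area for two functions, since each box has area $\delta^3$) and $\delta^{-7\cdot 2/q}$ from the spatial rescaling. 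The spatial scaling is $x_1\sim\delta^{-1}$, $x_2\sim\delta^{-2}$, $x_3\sim\delta^{-4}$, for a total of $\delta^{-7}$. It therefore suffices to prove a uniform bilinear $L^2\times L^2\to L^{q/2}$ estimate for separated unit-scale pairs. We obtain this by interpolating the trivial $L^\infty$ bound with the $L^2$ bound
\[
 \norm{\E f\,\E f'}_2\lesssim\norm f_2\norm{f'}_2 .
\]

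The $L^2$ estimate is the heart of the argument. By Plancherel it is equivalent to the bound $\norm{\Gamma_*(f\,dud v)*\Gamma_*(f'\,dudv)}_{L^2}$, which by Cauchy--Schwarz follows from a uniform bound on the Jacobian of the map $(z,z')\mapsto\Gamma(z)+\Gamma(z')$ on the fibers. This amounts to a lower bound for the $2\times2$ minors of $(\nabla\Gamma(z),\nabla\Gamma(z'))$, i.e.\ for $|\nabla\varphi(z)-\nabla\varphi(z')|$ in the fiber direction. Separation can occur in two ways. When the pair is separated in $v$ (the transverse slope direction), we use $\partial_v\varphi\approx 2v$ as a new coordinate, and the Jacobian is nondegenerate. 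When the pair is separated only in $u$, we have $\partial_u\varphi(u,u^2+v)$ with $v$ small, which degenerates like $|u_1-u_2|$ times a factor that can vanish. Here we expect to need the weighted convolution estimate: a bounded convolution density with weight $|u_1-u_2|$. We prove it by parametrizing the fiber, minimizing the fiber-defining function in the strictly convex variable, and using evenness together with the substitution $w=s^2$ to turn the square-root singularity into an integrable one. All these quantities depend only on derivatives of $\varphi$ up to order four, so the perturbation $H$ changes them by $O(\varepsilon_0)$.

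The main obstacle is this weighted convolution bound in the $u$-separated case, uniformly in $H$. The rest (Whitney bookkeeping, rescaling invariance, interpolation, and the final sum over scales together with the diagonal near-zero-scale terms) is routine.
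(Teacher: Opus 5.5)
Your architecture matches the paper's: Whitney pairs of $\delta\times\delta^2$ boxes, an $L^2$ bilinear bound from a bounded convolution density, interpolation with the trivial $L^\infty$ bound to get $\delta^{3-14/q}$, summation for $q>14/3$, and the weighted $\abs{u-u'}$ lemma proved by convex minimization, evenness and $w=s^2$. The gap is your reduction of a scale-$\delta$ pair to a unit-scale pair by parabolic rescaling, with the claim that the rescaled phase is again $P_0+H_\delta$ with $\norm{H_\delta}_{C^4}\lesssim\varepsilon_0$. This overlooks the division of the phase by $\delta^4$. Under $u=u_0+\delta\tilde u$, $y=y_0+2u_0\delta\tilde u+\delta^2\tilde y$, the coefficient $\partial_u^i\partial_y^jH$ is multiplied by about $\delta^{i+2j-4}$. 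After discarding affine terms, the contributions of $\partial_u^2H$, $\partial_u\partial_yH$ and $\partial_u^3H$ therefore grow like $\delta^{-2}$, $\delta^{-1}$ and $\delta^{-1}$.

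No better affine normalization repairs this. Take $\varphi=(y-u^2)^2+\varepsilon u^3$. Its Hessian determinant is $-8(y-u^2)+12\varepsilon u$. At a point $(u_0,y_0)$ of the parabolic curve $y-u^2=\tfrac32\varepsilon u$, the restriction of $\varphi$ to the kernel line $t\mapsto(u_0+t,y_0+2u_0t)$ is an affine function of $t$ plus $\varepsilon t^3+t^4$. Setting $\tilde t=t/\delta$ and keeping the quartic of unit size turns this into $\varepsilon\delta^{-1}\tilde t^3+\tilde t^4$. By contrast, any phase $C^4$-close to $P_0$ has cubic coefficient $O(\varepsilon_0)$ along its kernel lines at parabolic points. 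So at small scales the perturbed surface looks like a fold, not like $P_0$. The class \eqref{eq:C4small} is not scale-invariant, and the uniform unit-scale bilinear bound you want to invoke is not available.

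The paper never rescales; it proves $\norm{\sigma_Q*\sigma_{Q'}}_\infty\lesssim\delta^{-1}$ directly at every scale. Lemma~\ref{lem:weighted} is a single global, scale-free statement, so $u$-separated pairs give $\sigma_Q*\sigma_{Q'}\le(c\delta)^{-1}\nu$ at once. For $v$-separated pairs the paper uses the exact slope coordinate $v=\tfrac12\partial_2\varphi(u,y)$, and builds the Whitney boxes in it. Then $\partial_2\varphi(u,y)-\partial_2\varphi(u',y')=2(v-v')$ exactly, so changing variables in $y$ costs $\delta^{-2}$ and the remaining $u$-interval has length $\delta$.

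Your coordinate $v=y-u^2$ would fail here even without rescaling. In it the same difference is $2(v-v')+O(\varepsilon_0\delta)$, and this error swamps $\abs{v-v'}\approx\delta^2$ once $\delta\ll\varepsilon_0$. Finally, no orthogonality is needed within a scale: the triangle inequality in $L^{q/2}$, bounded partners and Cauchy--Schwarz give $\sum_{Q\sim Q'}\norm{g_Q}_2\norm{g_{Q'}}_2\lesssim\norm g_2^2$.
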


\subsection{The transverse slope coordinate}
Take $\varepsilon_0\leq1/2$ and introduce $(u,v)$ by
\begin{equation*}
 v=\tfrac12\partial_2\varphi(u,y)
   =y-u^2+\tfrac12\partial_2H(u,y).
\end{equation*}
Since $3/4\leq\tfrac12\partial_2^2\varphi(u,y)\leq5/4$,
this change is strictly increasing in $y$ for each fixed $u$. For $\abs u\leq1$, its values at
$y=u^2\pm2$ lie beyond $\pm1$, respectively; these points
and the intervening segment lie in $\Omega_*$. Hence it has a
smooth inverse $y=Y_\varphi(u,v)$ on a neighborhood of
$D=[-1,1]^2$, with
\begin{equation}\label{eq:J}
 J(u,v):=\partial_2Y_\varphi(u,v)
 =\frac{2}{2+(\partial_2^2H)(u,Y_\varphi(u,v))},
 \qquad \frac45\leq J(u,v)\leq\frac43.
\end{equation}
Moreover, on $D_0$,
$\abs v\leq1/2+\varepsilon_0/2\leq3/4$, so its image lies in $D$.
Define
\begin{equation*}
 \Gamma(u,v)=\bigl(u,Y_\varphi(u,v),
          \varphi(u,Y_\varphi(u,v))\bigr),
 \qquad (u,v)\in D,
\end{equation*}
and write
\[
 \E g(x)=\int_D e^{ix\cdot\Gamma(u,v)}g(u,v)\dd u\dd v.
\]
Extending $f$ by zero outside $D_0$, the change of variables gives
$\E_{\varphi,D_0}f=\E g$ for
$g(u,v)=f(u,Y_\varphi(u,v))J(u,v)$, with
$\norm g_2\approx\norm f_2$ by \eqref{eq:J}.
It therefore suffices to prove
\begin{equation}\label{eq:squareestimate2}
 \norm{\E g}_{L^q(\R^3)}\leq C_q\norm g_{L^2(D)},
 \qquad 14/3<q\leq\infty,
\end{equation}
uniformly under \eqref{eq:C4small}.

\subsection{A bounded weighted convolution density}
\begin{lemma}\label{lem:weighted}
Define the finite positive measure $\nu$ by
\[
 \begin{aligned}
 \int_{\R^3}\Psi(\zeta)\dd\nu(\zeta)
 :=\int_{D\times D}&
 \Psi\bigl(\Gamma(u,v)+\Gamma(u',v')\bigr)\\
 &\times\abs{u-u'}\dd u\dd v\dd u'\dd v',
 \qquad \Psi\in C_c(\R^3).
 \end{aligned}
\]
Then $\nu=W\dd\zeta$, where
\begin{equation*}
 \norm W_{L^\infty(\R^3)}\leq C
\end{equation*}
with an absolute constant.
\end{lemma}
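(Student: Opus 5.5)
The plan is to compute $W$ by the coarea formula on the one-dimensional fibres of the map
\[
(u,v,u',v')\mapsto\zeta=\Gamma(u,v)+\Gamma(u',v').
\]
We then show that the weight $\abs{u-u'}$ exactly cancels the degeneracy of the Jacobian, up to an integrable singularity of arcsine type.

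\textbf{Step 1: the fibres and the key Jacobian identity.} Parametrize each fibre by $\zeta_1=u+u'$ and $s=(u-u')/2$. For fixed $(\zeta_1,s)$, the points $u,u'$ are fixed, and it remains to solve
\[
F(v,v')=\Bigl(Y_\varphi(u,v)+Y_\varphi(u',v'),\ \varphi(u,Y_\varphi(u,v))+\varphi(u',Y_\varphi(u',v'))\Bigr)=(\zeta_2,\zeta_3).
\]
The slope coordinate was chosen so that $\partial_y\varphi(u,Y_\varphi(u,v))=2v$. Hence
\[
\partial_v\bigl[\varphi(u,Y_\varphi(u,v))\bigr]=2vJ(u,v),
\]
and therefore
\[
\det DF=2J(u,v)J(u',v')\,(v'-v).
\]
Using \eqref{eq:J}, this gives
\[
W(\zeta)\approx\sum\int\frac{\abs{s}}{\abs{v-v'}}\dd s,
\]
where the sum runs over the (at most two) solutions $(v,v')$ for each $s$. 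So it suffices to bound $\int\abs s\,\abs{v-v'}^{-1}\dd s$ uniformly in $\zeta$.

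\textbf{Step 2: the strictly convex variable.} Fix $(\zeta_1,s,\zeta_2)$. The first equation defines $v'=v'(v)$ smoothly, with $dv'/dv=-J/J'<0$. Set $G(v)=\varphi(u,Y_\varphi(u,v))+\varphi(u',Y_\varphi(u',v'(v)))$. Then
\[
G'(v)=2J(u,v)\,(v-v'(v)),
\]
which is strictly increasing, with derivative $\approx 1$ uniformly under \eqref{eq:C4small}. So $G$ is strictly convex and is minimized at the unique $v_*$ with $v_*=v'(v_*)$; write $m(s)=G(v_*)$. Quadratic behaviour near the minimum gives
\[
G(v)-m(s)\approx(v-v'(v))^2.
\]
Consequently, on the fibre, $\abs{v-v'}\approx(\zeta_3-m(s))_+^{1/2}$, and
\[
W(\zeta)\lesssim\int\frac{\abs s\dd s}{(\zeta_3-m(s))_+^{1/2}}.
\]

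\textbf{Step 3: evenness and $w=s^2$.} In the model $H=0$ one finds $a:=2v_*=\zeta_2-\zeta_1^2/2-2s^2$ and $m=a^2/2$. The integral then becomes, after the substitution $w=s^2$,
\[
\tfrac12\int\frac{dw}{\sqrt{\zeta_3-(c-2w)^2/2}},\qquad c=\zeta_2-\zeta_1^2/2,
\]
which is an arcsine integral bounded by an absolute constant. For the perturbed case, split into $s>0$ and $s<0$ and substitute $w=s^2$ on each half. I would then show that $m=\tfrac12 a(w)^2+O(\varepsilon_0)\,a(w)^2$, where $a(w)$ is monotone in $w$ with $a'(w)\approx-2$. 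The derivation uses implicit differentiation of $v_*$ in $s$ together with the fact that $\abs s\,ds=dw/2$, so the $C^4$ bounds of $H$ control $a$ in $C^2$ as a function of $w$. With this, the same change of variable $t=a(w)$ reduces the integral to
\[
\int\frac{dt}{\sqrt{(K-\kappa(t)t^2)_+}},\qquad \kappa\approx\tfrac12,
\]
which is bounded by a constant times $\int_{-1}^{1}(1-r^2)^{-1/2}\dd r$.

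\textbf{Main obstacle.} The delicate point is Step 3 in the perturbed case. One must prove that $m(s)$, viewed as a function of $w=s^2$, retains the structure "square of a monotone function" uniformly, without losing the cancellation at $s=0$ where the weight $\abs s$ vanishes. A cruder bound of the form $\zeta_3-m\gtrsim$ (distance to the zero set) fails at double roots. My first attempt would be to write
\[
m(s)=\min_{v}G=\tfrac12\bigl(2v_*\bigr)^2\,(1+O(\varepsilon_0))
\]
directly from the normal form $\varphi(u,Y)=\varphi(u,Y_0)+\int 2v\,J\,dv$. This should isolate the factor $a^2$ exactly, after which only the monotonicity of $a$ in $w$ remains to be checked.
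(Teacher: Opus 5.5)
Your Steps 1 and 2 are correct, and they are the paper's reduction carried out in slope coordinates rather than graph coordinates. The Jacobian $2JJ'(v'-v)$, strict convexity along the fibre, and the bound $W\lesssim\int\abs s\,(\zeta_3-m(s))_+^{-1/2}\dd s$ all match; your $m(s)$ is the paper's $F(s)=\min_t G(s,t)$.

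The gap is in Step 3, where the proposed identity $m=\tfrac12(2v_*)^2(1+O(\varepsilon_0))$ is false. Integrating $\partial_v[\varphi(u,Y_\varphi(u,v))]=2vJ$ from $v=0$ leaves the additive term $\varphi(u,Y_\varphi(u,0))+\varphi(u',Y_\varphi(u',0))$. This is the sum of the minima of $\varphi(u,\cdot)$ and $\varphi(u',\cdot)$, which is $O(\varepsilon_0)$ but depends on $s$. Already for $H=\varepsilon u^2$ (so $J\equiv1$) one finds, along the fibre, $m=\tfrac12(a-\varepsilon)^2+\mathrm{const}$. So $m$ minus any constant is not of the form $\kappa(a)a^2$ with $\kappa$ bounded near $a=0$. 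For general $H$ no fixed recentering helps; all you can say is that $m$ equals $\tfrac12a^2$ plus an $O(\varepsilon_0)$ error.

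Closeness in sup norm does not suffice. If $m$, as a function of $w$, has a nondegenerate local maximum at height exactly $\zeta_3$ inside the sublevel set, the integral diverges logarithmically. So you need second-order control of $m$ in $w$. Getting it through $a(w)=2v_*(\sqrt w)$ requires four $s$-derivatives of $v_*$. But $v_*$ is built from $Y_\varphi$, i.e.\ from $\partial_2\varphi$, so under \eqref{eq:C4small} it is uniformly controlled only in $C^3$.

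The missing idea is to prove uniform convexity of $K(w)=m(\sqrt w)$ directly, without any factorization:
\begin{itemize}
\item Work in graph coordinates with $G(s,t)=\varphi(\tfrac{\zeta_1}2+s,\tfrac{\zeta_2}2+t)+\varphi(\tfrac{\zeta_1}2-s,\tfrac{\zeta_2}2-t)$. This is $O(\varepsilon_0)$-close in $C^4$ to $2(\tfrac{\zeta_2}2-\tfrac{\zeta_1^2}4-s^2)^2+2(t-\zeta_1s)^2$. Let $\tau(s)$ be its minimizer in $t$.
\item The envelope identity $m'(s)=G_s(s,\tau(s))$ expresses $m^{(4)}$ through derivatives of $G$ of order at most four. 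This gives $m^{(4)}=48+O(\varepsilon_0)\geq24$ with no derivative loss.
\item The symmetry $G(-s,-t)=G(s,t)$ makes $m$ even, so $m'''(0)=0$ and $m'''(\theta)\geq24\theta$ for $\theta\geq0$. Hence
\[
K''(w)=\frac{1}{4s^3}\int_0^s\theta\,m'''(\theta)\dd\theta\geq2,\qquad s=\sqrt w .
\]
\item No change of variables $t=a(w)$ is then needed. On the sublevel interval $[\ell_-,\ell_+]$ of $K$ at height $\zeta_3$, comparing $K$ with its chord gives $\zeta_3-K(w)\geq(w-\ell_-)(\ell_+-w)$. Therefore $\int_{\ell_-}^{\ell_+}(\zeta_3-K)^{-1/2}\dd w\leq\pi$.
\end{itemize}
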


\begin{proof}
Return to the graph coordinates $(u,y)$ and $(u',y')$, where
$y=Y_\varphi(u,v)$ and $y'=Y_\varphi(u',v')$.
The Jacobian factors $\tfrac12(\partial_2^2\varphi)(u,y)$ and
$\tfrac12(\partial_2^2\varphi)(u',y')$ are uniformly bounded
and may be suppressed in an upper bound.
Fix $u+u'=\zeta_1=2a$ and $y+y'=\zeta_2=2c$, and write
\begin{equation*}
 u=a+s,\quad u'=a-s,
 \qquad y=c+t,\quad y'=c-t.
\end{equation*}
For $(u,v),(u',v')\in D$, we have $\abs a,\abs s\leq1$ and
$\max\{\abs y,\abs{y'}\}\leq2+\varepsilon_0/2<3$,
hence $\abs c,\abs t<3$.
For these $a,c$, the full rectangle $\abs s\leq1$, $\abs t\leq4$
still gives points in $\Omega_*$, since
$\abs{a\pm s}\leq2$ and $\abs{c\pm t}<7$.
The phase sum and its unperturbed counterpart are
\begin{equation*}
 \begin{aligned}
 G(s,t)&=\varphi(a+s,c+t)+\varphi(a-s,c-t),\\
 G_0(s,t)&=2(c-a^2-s^2)^2+2(t-2as)^2.
 \end{aligned}
\end{equation*}
Every partial derivative of $G-G_0$ of order at most four is
bounded by $2\varepsilon_0$. In particular, $G_{tt}\geq3$.

The function $G_t(s,\cdot)$ has opposite signs at $2as\pm1$.
It therefore has a unique zero $t=\tau(s)$ in $[-4,4]$, and
\begin{equation*}
 \abs{\tau(s)-2as}\leq\varepsilon_0/2.
\end{equation*}
Set $F(s)=G(s,\tau(s))$. The symmetry $G(-s,-t)=G(s,t)$ and
uniqueness imply that $\tau$ is odd and $F$ is even.
We next show that $F^{(4)}$ stays positive using only four
derivatives of $G$.
All partial derivatives in the following calculation are evaluated
at $(s,\tau(s))$. Differentiating $G_t(s,\tau(s))=0$ gives
\[
 h:=\tau'=-\frac{G_{st}}{G_{tt}},
 \qquad
 h'=-\frac{G_{sst}+2hG_{stt}+h^2G_{ttt}}{G_{tt}},
 \qquad \abs h\leq3.
\]
Starting from $F'=G_s$, we obtain
\[
 F'''=G_{sss}+3hG_{sst}+3h^2G_{stt}+h^3G_{ttt},
\]
and one more differentiation yields
\begin{equation*}
 \begin{aligned}
 F^{(4)}={}&G_{ssss}+4hG_{ssst}+6h^2G_{sstt}
              +4h^3G_{sttt}+h^4G_{tttt}\\
 &-\frac{3}{G_{tt}}
       \bigl(G_{sst}+2hG_{stt}+h^2G_{ttt}\bigr)^2.
 \end{aligned}
\end{equation*}
For $G_0$, we have $(G_0)_{ssss}=48$, while the other fourth-order
derivatives and all third-order derivatives involving $t$ vanish.
Thus, uniformly in $a,c$,
\begin{equation}\label{eq:Ffourlower}
 F^{(4)}(s)=48+O(\varepsilon_0)\geq24,
\end{equation}
after choosing $\varepsilon_0$ sufficiently small.

Put $K(w)=F(\sqrt w)$ for $0\leq w\leq1$.
Since $F$ is even, we have $F'(0)=F'''(0)=0$. Thus, for $s=\sqrt w>0$, we have
\begin{equation*}
 K''(w)=\frac{sF''(s)-F'(s)}{4s^3}
       =\frac1{4s^3}\int_0^s \theta F'''(\theta)\dd\theta\geq2.
\end{equation*}
Here $F'''(\theta)\geq24\theta$ by \eqref{eq:Ffourlower}.
Taylor's theorem gives $K''(0)=F^{(4)}(0)/12$, so $K\in C^2([0,1])$
and the same lower bound holds at zero.

For fixed $s$, strict convexity in $t$ gives at most two preimages
of any value $\zeta_3$. At a regular preimage,
\begin{equation*}
 \abs{G_t(s,t)}^2\geq6\bigl(\zeta_3-F(s)\bigr).
\end{equation*}
Indeed, integrate $\partial_t(G_t^2)=2G_tG_{tt}$ between
$t$ and $\tau(s)$, using $G_{tt}\geq3$ on either side of the minimum.
Changing variables on the two monotone branches bounds the resulting
density by $C(\zeta_3-F(s))^{-1/2}$ for $\zeta_3>F(s)$.
The critical graph $t=\tau(s)$ has measure zero, so these changes
of variables and Fubini also prove absolute continuity.
Since $\abs{u-u'}=2\abs s$, evenness and $w=s^2$ give
\begin{equation*}
 \begin{aligned}
 W(2a,2c,\zeta_3)
 &\lesssim\int_{\{s\in[-1,1]:F(s)<\zeta_3\}}
       \frac{\abs s\dd s}{\sqrt{\zeta_3-F(s)}}\\
 &=\int_{\{w\in[0,1]:K(w)<\zeta_3\}}
       \frac{\dd w}{\sqrt{\zeta_3-K(w)}}
 \end{aligned}
\end{equation*}
for almost every $(2a,2c,\zeta_3)\in\R^3$.
If the last sublevel set is nonempty, its closure is an interval
$[\ell_-,\ell_+]$, with $K(\ell_-),K(\ell_+)\leq \zeta_3$, also at truncated endpoints.
By $K''\geq2$,
\[
 K(w)\leq\frac{\ell_+-w}{\ell_+-\ell_-}K(\ell_-)
          +\frac{w-\ell_-}{\ell_+-\ell_-}K(\ell_+)
          -(w-\ell_-)(\ell_+-w).
\]
Consequently,
\begin{equation*}
 \int_{\ell_-}^{\ell_+}\frac{\dd w}{\sqrt{\zeta_3-K(w)}}
 \leq\int_{\ell_-}^{\ell_+}\frac{\dd w}{\sqrt{(w-\ell_-)(\ell_+-w)}}=\pi,
\end{equation*}
which proves the lemma.
\end{proof}

\subsection{Whitney pairs and bilinear estimates}

We use the parabolic Whitney decomposition associated with the
nested grid on $[-2,2]\times[-8,8]$; compare
\cite[Section~2.5]{TVV}. At scale $\delta_j=4\cdot2^{-j}$,
$j\geq1$, the rectangles have side lengths
$\delta_j\times\delta_j^2$. Write $Q\sim Q'$ when their
closures are disjoint but their parents' closures intersect;
the parent has side lengths $2\delta_j\times4\delta_j^2$.
The relation is defined before intersecting the rectangles with $D$.

At each scale the rectangles partition $D$, and each has $O(1)$
partners. Over all scales, the ordered products $Q\times Q'$
partition $D\times D$ off the diagonal, up to null sets.
For each pair, writing $\delta=\delta_j$, one of the bounds
\begin{equation*}
 \abs{u-u'}\geq c\delta
 \qquad\text{or}\qquad
 \abs{v-v'}\geq c\delta^2
\end{equation*}
holds throughout $Q\times Q'$.

Let $\sigma_Q$ be the finite positive measure on $\R^3$
defined by
\[
 \int_{\R^3}\Psi(\zeta)\dd\sigma_Q(\zeta)
 =\int_Q\Psi(\Gamma(u,v))\dd u\dd v,
 \qquad \Psi\in C_c(\R^3).
\]
Thus $\sigma_Q$ assigns to a Borel set $A\subset\R^3$
the parameter area of
$\{(u,v)\in Q:\Gamma(u,v)\in A\}$.
We claim that $\sigma_Q*\sigma_{Q'}$ has a density satisfying
\begin{equation}\label{eq:pairdensity}
 \norm{\sigma_Q*\sigma_{Q'}}_\infty\lesssim\delta^{-1}
 \qquad(Q\sim Q').
\end{equation}
For $u$-separated pairs, positivity and Lemma~\ref{lem:weighted}
give $\sigma_Q*\sigma_{Q'}\leq(c\delta)^{-1}\nu$.
For $v$-separated pairs, return to $(u,y)$ and $(u',y')$.
Fix $u+u'=\zeta_1$ and $y+y'=\zeta_2$, so that
$u'=\zeta_1-u$ and $y'=\zeta_2-y$.
The derivative of the third-coordinate phase in $y$ is
\begin{equation*}
 (\partial_2\varphi)(u,y)-(\partial_2\varphi)(u',y')=2(v-v'),
\end{equation*}
whose magnitude is at least $c\delta^2$ at contributing points.
The second derivative of the phase sum in $y$ is
$(\partial_2^2\varphi)(u,y)+(\partial_2^2\varphi)(u',y')\geq3$,
so each level has at most two preimages. Changing variables
in $y$ costs at most $C\delta^{-2}$, and the remaining $u$-interval
has length at most $C\delta$. This proves \eqref{eq:pairdensity}.

For $f\in L^1(Q)$, write $f\sigma_Q$ for the finite
complex measure defined by
\[
 \int_{\R^3}\Psi(\zeta)\dd(f\sigma_Q)(\zeta)
 =\int_Q\Psi(\Gamma(u,v))f(u,v)\dd u\dd v,
 \qquad \Psi\in C_c(\R^3).
\]
In particular,
\[
 \E_Qf(x)
 =\int_{\R^3}e^{ix\cdot\zeta}\dd(f\sigma_Q)(\zeta).
\]
For $f\in L^2(Q)$ and $h\in L^2(Q')$, all the convolution
measures below are absolutely continuous; we identify them with
their densities. Applying Cauchy--Schwarz inequality to the preceding
change-of-variables formulas gives
\[
 \abs{(f\sigma_Q)*(h\sigma_{Q'})(\zeta)}^2
 \leq(\sigma_Q*\sigma_{Q'})(\zeta)
       \bigl((\abs f^2\sigma_Q)*(\abs h^2\sigma_{Q'})\bigr)(\zeta).
\]
Integration and Plancherel therefore yield
\begin{equation*}
 \norm{\E_Qf\,\E_{Q'}h}_2
 \lesssim\delta^{-1/2}\norm f_2\norm h_2.
\end{equation*}
Since $\abs Q,\abs{Q'}\leq\delta^3$, Cauchy--Schwarz inequality also gives
\begin{equation*}
 \norm{\E_Qf\,\E_{Q'}h}_\infty
 \lesssim\delta^3\norm f_2\norm h_2.
\end{equation*}
Interpolating these norms, with weight $4/q$ on the $L^2$ bound,
we obtain
\begin{equation}\label{eq:bilineargain}
 \norm{\E_Qf\,\E_{Q'}h}_{q/2}
 \lesssim\delta^{3-14/q}\norm f_2\norm h_2,
 \qquad 4\leq q<\infty.
\end{equation}

\subsection{Completion of the proof}
\begin{proof}[Proof of Proposition~\ref{prop:square}]
Fix $14/3<q<\infty$ and $g\in L^2(D)$.
The Whitney decomposition gives
\[
 (\E g)^2=\sum_{j\geq1}\sum_{Q\sim Q'}
          \E_Qg_Q\,\E_{Q'}g_{Q'},
 \qquad g_Q=g\mathbf1_Q.
\]
Since $g\in L^1(D)$, the product expansion follows from
the Fubini theorem. We first estimate finite partial sums;
the fact that $\sum_j\delta_j^{3-14/q}<\infty$ give absolute convergence in $L^{q/2}$
and identify the limit with $(\E g)^2$.

At each scale the rectangles partition $D$ and have boundedly
many partners, so
\[
 \sum_{Q\sim Q'}\norm{g_Q}_2\norm{g_{Q'}}_2
 \lesssim\sum_Q\norm{g_Q}_2^2=\norm g_2^2.
\]
The triangle inequality in $L^{q/2}$ and \eqref{eq:bilineargain}
now give
\begin{equation*}
 \norm{\E g}_q^2
 \lesssim\sum_{j\geq1}\delta_j^{3-14/q}\norm g_2^2
 \lesssim_q\norm g_2^2,
\end{equation*}
where we use the fact $3-14/q>0$ to make the geometric series converge.
This proves \eqref{eq:squareestimate2}, and \eqref{eq:J}
transfers the bound to $D_0$.
The case $q=\infty$ follows directly from Cauchy--Schwarz inequality on $D_0$.
\end{proof}

\section{Proof of the flat-perturbation theorem}
\label{sec:global}

Let $\phi (\xi_1,\xi_2) = (\xi_2-\xi_1^m)^n + R(\xi_1,\xi_2)$, where $m,n \ge 2$ and $R$ satisfies \eqref{eq:flat}.
Let $D_\phi$ denote the original parameter domain of $\phi$.
We shall choose $U=U(m,n,R)\Subset D_\phi$, independently of
$q$, and prove \eqref{eq:mainextension} for
\[
 \E_{\phi,U}g(x)
 :=\int_U e^{i(x_1\xi_1+x_2\xi_2+x_3\phi(\xi_1,\xi_2))}g(\xi_1,\xi_2)\dd\xi_1\dd\xi_2,
 \qquad g\in L^2(U).
\]
The neighborhood $U$ is fixed after the geometric smallness
conditions below, before choosing $q$ or $g$.
Functions are extended by zero outside their parameter domains.
Rescaled coordinates are denoted by
$(\widetilde u,\widetilde y,\widetilde z)$, and
$x\in\R^3$ denotes the Fourier variable.

\subsection{Separate the exterior from the root neighborhood}
The Newton data \eqref{eq:newtondata} show that the original coordinates
are linearly adapted but not adapted, with leading principal root
$\xi_2=\xi_1^m$. Also,
$\partial_2^n\phi=n!+\partial_2^nR$ stays nonzero on a small
neighborhood, so the phase is of finite type there.

Choose $\chi\in C_c^\infty(\R)$ with $0\leq\chi\leq1$,
$\chi=1$ on $[-1,1]$, and $\supp\chi\subset[-2,2]$, and set
\begin{equation*}
 \rho(\xi_1,\xi_2)=\chi\!\left(\frac{\xi_2-\xi_1^m}{\eta_0\xi_1^m}\right)
 \qquad(\xi_1\neq0).
\end{equation*}
Here $\eta_0>0$ will be fixed sufficiently small below; the values
of $\rho$ on $\xi_1=0$ do not affect any integral.
For each sufficiently small fixed $\eta_0$, Palle's exterior
estimate \cite[Proposition~4.3]{Palle}, applied separately on the
two sides of $\xi_1=0$ with a smooth cutoff equal to one on the
neighborhood under consideration, provides a bounded open neighborhood
$U_{\mathrm{ext}}\Subset D_\phi$ of the origin such that
\[
 \norm{\E_{\phi,U}((1-\rho)g)}_q
 \lesssim_q \norm g_{L^2(U,(1-\rho)\dd\xi)}
 \leq \norm g_{L^2(U)}
\]
whenever $U\subset U_{\mathrm{ext}}$, $g\in L^2(U)$, and
\begin{equation*}
 2d+2=2+\frac{2mn}{m+1}\leq q\leq\infty.
\end{equation*}
More precisely, the neighborhood $U_{\mathrm{ext}}$ is chosen at $q=2d+2$;
interpolation with the trivial $L^\infty$ bound gives the remaining
exponents on the same neighborhood.

It remains to estimate $\E_{\phi,U}(\rho g)$.
Since $\rho g$ is supported in the horn
\begin{equation*}
 \abs{\xi_2-\xi_1^m}\leq2\eta_0\abs{\xi_1}^m,
\end{equation*}
we first establish estimates on its normalized annuli, and then
choose $U\subset U_{\mathrm{ext}}$ small enough for them to apply.

\subsection{Rescale the annuli and use flatness}
Split the horn into disjoint annuli $r\leq\abs{\xi_1}<2r$, with
$r$ dyadic. Writing $\xi_3=\phi(\xi_1,\xi_2)$ on the graph,
the dilation in graph coordinates is
\begin{equation*}
 \xi_1=r\widetilde u,\qquad \xi_2=r^m\widetilde y,
 \qquad \xi_3=r^{mn}\widetilde z.
\end{equation*}
Writing $S_r(\widetilde u,\widetilde y)
=(r\widetilde u,r^m\widetilde y)$ for its parameter part,
we obtain the normalized phase
\begin{equation*}
 \begin{aligned}
 \phi_r(\widetilde u,\widetilde y)
 &=r^{-mn}\phi\bigl(S_r(\widetilde u,\widetilde y)\bigr)
   =(\widetilde y-\widetilde u^m)^n
       +R_r(\widetilde u,\widetilde y),\\
 R_r(\widetilde u,\widetilde y)
 &=r^{-mn}R(r\widetilde u,r^m\widetilde y).
 \end{aligned}
\end{equation*}
Set $I_+=[1,2]$ and $I_-=[-2,-1]$.
Every normalized annulus is contained in the fixed compact set
\begin{equation*}
 \Omega:=\bigl\{(\widetilde u,\widetilde y):
 \widetilde u\in I_+\cup I_-,\quad
 \abs{\widetilde y-\widetilde u^m}
       \leq2\eta_0\abs{\widetilde u}^{m}\bigr\}.
\end{equation*}
Write $\Omega_\pm=\Omega\cap\{\pm\widetilde u>0\}$.
For all sufficiently small $r$, define
\[
 \E_{\phi_r,\Omega}G(x)
 :=\int_\Omega
 e^{i(x_1\widetilde u+x_2\widetilde y
                         +x_3\phi_r(\widetilde u,\widetilde y))}
 G(\widetilde u,\widetilde y)\dd\widetilde u\dd\widetilde y,
 \qquad G\in L^2(\Omega).
\]
This operator uses parameter Lebesgue measure, with no scaling
factor included in its definition.

On any fixed compact set $\mathcal K$ in the
$(\widetilde u,\widetilde y)$-plane,
$S_r(\mathcal K)\Subset D_\phi$ and
$\abs{S_r(\widetilde u,\widetilde y)}\lesssim_{\mathcal K}r$
for all sufficiently small $r$.
Since every derivative of $R$ is flat at the origin, Taylor's theorem and the chain rule give, for $i+j\leq N$ and every
integer $L\geq0$,
\[
 \begin{aligned}
 \abs{(\partial_1^i\partial_2^jR_r)
       (\widetilde u,\widetilde y)}
 =r^{i+mj-mn}
   \abs{(\partial_1^i\partial_2^jR)
                  (r\widetilde u,r^m\widetilde y)}
 \lesssim_{N,L,\mathcal K}r^{L+i+mj-mn}.
 \end{aligned}
\]
Choosing $L\geq mn+K$ yields
\begin{equation}\label{eq:rapidC}
 \norm{R_r}_{C^N(\mathcal K)}
 \leq C_{N,K,\mathcal K}r^K,
 \qquad N,K\geq0.
\end{equation}
Thus $\phi_r$ converges to $(\widetilde y-\widetilde u^m)^n$
faster than any power of $r$ in every fixed $C^N(\mathcal K)$ norm.

\subsection{The normalized estimate when \texorpdfstring{$n\geq3$}{n at least 3}}
The derivative-zero curve required by Proposition~\ref{prop:strip}
is given by
\begin{equation}\label{eq:Ar}
 \begin{aligned}
 \partial_2^{n-1}
       \phi_r(\widetilde u,A_r(\widetilde u))&=0,\\
 n!\bigl(A_r(\widetilde u)-\widetilde u^m\bigr)
 &=-\partial_2^{n-1}
       R_r(\widetilde u,A_r(\widetilde u)).
 \end{aligned}
\end{equation}
The derivative of the defining equation in $A_r$ tends uniformly
to $n!$. The implicit-function theorem and \eqref{eq:rapidC}
therefore give a unique nearby solution on slightly larger intervals
than $I_\pm$, with
\begin{equation*}
 A_r(\widetilde u)\longrightarrow\widetilde u^m
 \quad\text{in every fixed }C^N\text{ norm}.
\end{equation*}
The derivative bounds follow by successively differentiating
\eqref{eq:Ar}, whose coefficient of the highest derivative of
$A_r$ is bounded away from zero.

On $I_+\cup I_-$, the absolute value of the second derivative of
$\widetilde u^m$ is at least $m(m-1)$. Hence
$\abs{A_r''}$ has a uniform positive lower bound and
$\norm{A_r}_{C^3}$ a uniform upper bound.
Fix a small strip width $c_*>0$ and put
\[
 \mathcal V_{r,\pm}:=
 \{(\widetilde u,\widetilde y):\widetilde u\in I_\pm,
             \ \abs{\widetilde y-A_r(\widetilde u)}\leq c_*\}.
\]
On these strips,
$\partial_2^n\phi_r
=n!+\partial_2^nR_r$
has positive uniform upper and lower bounds for small $r$.
Choose $\eta_0$ small and then $r$ small so that
$\Omega_\pm\subset\mathcal V_{r,\pm}$.
Apply Proposition~\ref{prop:strip} after extending
$G\mathbf1_{\Omega_\pm}$ by zero to $\mathcal V_{r,\pm}$,
and add the two estimates. This gives
\begin{equation}\label{eq:annulusn3}
 \norm{\E_{\phi_r,\Omega}G}_{L^q(\R^3)}
 \leq C_q\norm G_{L^2(\Omega)},
 \qquad 2n<q\leq\infty,
\end{equation}
with $C_q$ independent of $r$. The sign of the curvature on either
interval is immaterial.

\subsection{The normalized estimate when \texorpdfstring{$n=2$}{n equal to 2}}
Let $A(\widetilde u)=\widetilde u^m$ and cover $I_+\cup I_-$
by finitely many patches $\abs{\widetilde u-\widetilde u_0}\leq h$.
Here $h>0$ is chosen independently of $r$.
For each patch, put $c=A''(\widetilde u_0)/2\neq0$ and make
the affine change from local graph coordinates $(u,y,z)$
\begin{equation*}
 \begin{aligned}
 \widetilde u&=\widetilde u_0+hu,\\
 \widetilde y&=A(\widetilde u_0)+A'(\widetilde u_0)hu+ch^2y,\\
 \widetilde z&=c^2h^4z.
 \end{aligned}
\end{equation*}
Let $T(u,y)$ denote the parameter part of this change.
The transformed phase is
\begin{equation}\label{eq:localphase}
 \begin{aligned}
 \widetilde\phi_r(u,y)
 &:=\frac{\phi_r(T(u,y))}{c^2h^4}\\
 &=\left(y-
 \frac{A(\widetilde u_0+hu)-A(\widetilde u_0)
                       -A'(\widetilde u_0)hu}{ch^2}\right)^2
   +\frac{R_r(T(u,y))}{c^2h^4}.
 \end{aligned}
\end{equation}
Apply Proposition~\ref{prop:square} in $(u,y)$ coordinates,
with its enlarged domain $\Omega_*=(-3,3)\times(-10,10)$
and patch $D_0=\{(u,y):\abs u\leq1,\ \abs{y-u^2}\leq1/2\}$.
Thus the enlarged domain in normalized coordinates is
$T(\Omega_*)$ in the $(\widetilde u,\widetilde y)$-plane.

Taylor's theorem for the fixed polynomial $A$ gives
\begin{equation*}
 \frac{A(\widetilde u_0+hu)-A(\widetilde u_0)
                       -A'(\widetilde u_0)hu}{ch^2}
 =u^2+O_{C^4(\Omega_*)}(h),
\end{equation*}
uniformly in the centers.
Choose $h$ small enough that the first term of
\eqref{eq:localphase} differs from $(y-u^2)^2$ by at most
$\varepsilon_0/2$ in $C^4(\Omega_*)$.
With $h$ fixed, the finitely many sets $T(\overline{\Omega_*})$
lie in a fixed compact normalized set. By \eqref{eq:rapidC}
and the chain rule, the second term is smaller than
$\varepsilon_0/2$ in the same norm for every sufficiently small $r$.
Thus $\widetilde\phi_r$ satisfies the hypothesis of
Proposition~\ref{prop:square} on $\Omega_*$.

For each point in the current patch of $\Omega$, the corresponding
local coordinates $(u,y)$ satisfy $\abs u\leq1$ and
\begin{equation*}
 \abs{y-u^2}\leq C\eta_0h^{-2}+Ch.
\end{equation*}
Choose $h$ also so that $Ch\leq1/4$, and then choose
$\eta_0$ so that $C\eta_0h^{-2}\leq1/4$, keeping it below the
threshold for the exterior theorem.
Thus the inverse image of this patch under $T$ lies in
$D_0\subset\Omega_*$.

The parameter Jacobian and the linear part of the ambient map are
\[
 J_{\mathrm{par}}=\abs c\,h^3,
 \qquad
 L=\begin{pmatrix}
 h&0&0\\
 A'(\widetilde u_0)h&ch^2&0\\
 0&0&c^2h^4
 \end{pmatrix},
 \qquad \abs{\det L}=\abs c^3h^7.
\]
For a function $G_{\mathrm{patch}}$ supported in this patch, set
$F=G_{\mathrm{patch}}\circ T$, extended by zero on $D_0$,
and $b=(\widetilde u_0,A(\widetilde u_0),0)$.
The change of variables gives
\[
 \E_{\phi_r,\Omega}G_{\mathrm{patch}}(x)
 =J_{\mathrm{par}}e^{ix\cdot b}
       \E_{\widetilde\phi_r,D_0}F(L^Tx),
 \qquad
 \norm F_{L^2(D_0)}
 =J_{\mathrm{par}}^{-1/2}
       \norm{G_{\mathrm{patch}}}_{L^2(\Omega)}.
\]
Consequently, the total factor in the extension estimate is
\begin{equation*}
 J_{\mathrm{par}}^{1/2}\abs{\det L}^{-1/q}
 =\abs c^{1/2-3/q}h^{3/2-7/q}.
\end{equation*}
Since $h$ is fixed and $\abs c$ has positive uniform upper and
lower bounds, this factor is independent of $r$.
Decompose $G$ into a finite sum of functions supported in the
respective patches, apply Proposition~\ref{prop:square} to each
term, and sum the resulting estimates.
We obtain
\begin{equation}\label{eq:annulusn2}
 \norm{\E_{\phi_r,\Omega}G}_{L^q(\R^3)}
 \leq C_q\norm G_{L^2(\Omega)},
 \qquad 14/3<q\leq\infty,
\end{equation}
uniformly for small $r$.
The choices are made in the order $h$, $\eta_0$, and finally the
upper bound on $r$, independently of $q$.

\subsection{Return to the original scale and sum}
Choose a fixed compact rectangle $\mathcal K$ containing $\Omega$
and the enlarged domains used above. Only finitely many derivative
bounds, of order at most $n+2$, enter the local hypotheses.
By \eqref{eq:rapidC}, there is $r_0=r_0(m,n,R)\in(0,1)$ such
that $S_r(\mathcal K)\Subset D_\phi$ and those hypotheses hold
for every $0<r<r_0$. Fix a bounded open neighborhood of the origin
satisfying
\[
 U\Subset U_{\mathrm{ext}}
       \cap\{(\xi_1,\xi_2)\in\R^2:\abs{\xi_1}<r_0\}.
\]
This choice is independent of $q$.

Now fix a finite $q>\qc$ and $g\in L^2(U)$.
For $n\geq3$, use \eqref{eq:annulusn3}; for $n=2$, use
\eqref{eq:annulusn2}, since
\[
 q>2+\frac{4m}{m+1}\geq\frac{14}{3}.
\]
For dyadic $0<r<r_0$, set
\[
 g_r(\xi_1,\xi_2)=\rho(\xi_1,\xi_2)g(\xi_1,\xi_2)\mathbf1_{\{r\leq\abs{\xi_1}<2r\}},
 \qquad
 G_r(\widetilde u,\widetilde y)
     =g_r\bigl(S_r(\widetilde u,\widetilde y)\bigr),
\]
using zero extension outside $U$.
Then $G_r=0$ almost everywhere outside
$\Omega\cap S_r^{-1}(U)$ and $\sum_r g_r=\rho g$ almost
everywhere on $U$.

The exact scaling identity is
\begin{equation*}
 \E_{\phi,U}g_r(x_1,x_2,x_3)
 =r^{m+1}\E_{\phi_r,\Omega}G_r
       (rx_1,r^mx_2,r^{mn}x_3).
\end{equation*}
The Fourier-variable change has determinant $r^{mn+m+1}$, and
$\norm{G_r}_{L^2(\Omega)}
 =r^{-(m+1)/2}\norm{g_r}_{L^2(U)}$.
Hence
\begin{equation*}
 \norm{\E_{\phi,U}g_r}_q
 \leq C_qr^{\alpha(q)}\norm{g_r}_{L^2(U)},
 \qquad
 \alpha(q)=\frac{m+1}{2}-\frac{mn+m+1}{q}>0.
\end{equation*}
Positivity is exactly the condition $q>2+2mn/(m+1)$.
Disjointness of the annuli and Cauchy--Schwarz inequality give
\begin{equation*}
 \begin{aligned}
 \norm{\E_{\phi,U}(\rho g)}_q
 &\leq C_q\sum_r r^{\alpha(q)}\norm{g_r}_{L^2(U)}\\
 &\leq C_q\left(\sum_r r^{2\alpha(q)}\right)^{1/2}
          \left(\sum_r\norm{g_r}_{L^2(U)}^2\right)^{1/2}
 \lesssim_q\norm g_{L^2(U)}.
 \end{aligned}
\end{equation*}
The annular expansion also agrees with the integral, since
$g\in L^1(U)$. Adding the exterior contribution proves
\eqref{eq:mainextension}.
The line $\xi_1=0$ has parameter measure zero, and the case
$q=\infty$ follows directly from Cauchy--Schwarz inequality on $U$.

\subsection{Surface measure and duality}
Write $\eta\dd\sigma=a(\xi_1,\xi_2)\dd\xi_1\dd\xi_2$ under the graph parametrization,
where
\[
 a(\xi_1,\xi_2)=\eta(\Gamma_\phi(\xi_1,\xi_2))\sqrt{1+\abs{\nabla\phi(\xi_1,\xi_2)}^2}
\]
is bounded and nonnegative on $U$.
Duality in \eqref{eq:mainextension} gives
\[
 \left(\int_S\abs{\widehat f}^{\,2}\eta\dd\sigma\right)^{1/2}
 \leq\norm a_\infty^{1/2}
       \norm{\widehat f\circ\Gamma_\phi}_{L^2(U)}
 \lesssim_{q,\eta}\norm f_{L^{q'}(\R^3)}.
\]
This proves \eqref{eq:mainrestriction}, including $p=1$
by $q=\infty$.

\section{Proofs of the extensions}
\label{sec:extensions}

We prove Theorem~\ref{thm:extension-tangential}.
The case $k=0$ gives
Theorem~\ref{thm:extension-smooth-root}. 
The proof uses Proposition~\ref{prop:strip},
Proposition~\ref{prop:square}, and the exterior restriction
theorem stated as \cite[Proposition~4.3]{Palle}.

Next, we always assume $k\not= 0$. Set $\nu=k+mn$, then $d_k=\frac{\nu}{m+1}$ and $q_k=\max\{2+2d_k,\,2n\}$.

\subsection{Newton data and the exterior region}

Since $\psi$ is smooth, we can write
\[
\psi(t)=t^m(c+t\omega(t))
\]
with $\omega$ smooth. Relative to weights $(1,m)$, the
principal part of the phase is
\[
a(0,0)\xi_1^k(\xi_2-c\xi_1^m)^n.
\]
Every contribution from $a-a(0,0)$ or from
$\psi-c\xi_1^m$ has strictly larger weighted degree.
The flat term $R$ contributes no Taylor coefficients.

The principal edge joins $(k,n)$ and $(k+mn,0)$.
Since $k<n$, the diagonal meets this edge and
\begin{equation}
	\label{eq:extension-D}
	d(\phi)=d_k=\frac{k+mn}{m+1},
	\qquad k<d_k<n.
\end{equation}

We verify linear adaptedness.
The lowest ordinary homogeneous term is
$a(0,0)\xi_1^k\xi_2^n$.
Under an invertible linear substitution $\xi=L\zeta$,
suppose first that the linear form representing $\xi_2$
has two nonzero coefficients.
If the linear form representing $\xi_1$ also has two
nonzero coefficients, both pure monomials of degree $k+n$
occur. If it has only one nonzero coefficient, the
endpoint exponent vectors are
$(k+n,0),(k,n)$, or their coordinate interchanges.
Because $k<n$, these give Newton distance at most
$(k+n)/2$. Furthermore,
\[
d_k-\frac{k+n}{2}
=
\frac{(m-1)(n-k)}{2(m+1)}>0.
\]

If the linear form representing $\xi_2$ has only one
nonzero coefficient, interchange the $\zeta$ coordinates
if necessary and write
\[
\xi_2=d\zeta_2,\qquad
\xi_1=b\zeta_1+e\zeta_2,\qquad bd\ne0.
\]
The weighted principal part is
\[
a(0,0)b^k\zeta_1^k
\bigl(d\zeta_2-cb^m\zeta_1^m\bigr)^n,
\]
which has Newton distance $d_k$.
Thus $h_{\mathrm{lin}}(\phi)=d_k$.

In the smooth coordinates
\[
u=\xi_1,\qquad v=\xi_2-\psi(\xi_1),
\]
the Taylor series has Newton polyhedron with vertex $(k,n)$
and Newton distance $n$. Hence
\[
h(\phi)\ge n>d_k,
\]
so the original coordinates are not adapted.
Also,
\[
\partial_{\xi_1}^k\partial_{\xi_2}^n\phi(0,0)
=k!n!a(0,0)\ne0.
\]
This derivative remains nonzero after the neighborhood is
reduced, ensuring finite type there.

The principal root has leading term $c\xi_1^m$.
Proposition~4.3 of
Palle~\cite{Palle} therefore supplies the exterior extension
estimate for
\[
q\ge2d_k+2,\qquad q<\infty.
\]
Using the same cutoff decomposition as in the main theorem,
it remains to consider data supported in the horn
\begin{equation}
	\label{eq:extension-horn}
	|\xi_2-c\xi_1^m|
	\le C\eta|\xi_1|^m,
\end{equation}
where $\eta>0$ is sufficiently small and fixed.
The set $\xi_1=0$ has parameter measure zero.

\subsection{Annular normalization}

Decompose the horn into dyadic annuli
$r\le|\xi_1|<2r$ and treat the two signs of $\xi_1$
separately. Define
\[
\xi_1=ru,\qquad \xi_2=r^my,\qquad
\phi_r(u,y)=r^{-\nu}\phi(ru,r^my).
\]
Then
\begin{equation}
	\label{eq:extension-normalized}
	\phi_r(u,y)
	=
	u^ka(ru,r^my)
	\bigl(y-\psi_r(u)\bigr)^n+R_r(u,y),
\end{equation}
where
\[
\psi_r(u)=r^{-m}\psi(ru),\qquad
R_r(u,y)=r^{-\nu}R(ru,r^my).
\]
The normalized domain lies in a fixed compact set with
$u\in[1,2]$ or $u\in[-2,-1]$.

For every fixed integer $N\ge0$,
\[
\psi_r\longrightarrow cu^m,\qquad
a(ru,r^my)\longrightarrow a(0,0)
\quad\text{in }C^N.
\]
Indeed,
\[
\psi_r(u)=u^m(c+ru\omega(ru)).
\]
For all integers $N,K\ge0$, flatness also gives
\begin{equation}
	\label{eq:extension-flat}
	\|R_r\|_{C^N}\le C_{N,K}r^K.
\end{equation}
For $i+j\le N$,
\[
\partial_u^i\partial_y^jR_r
=
r^{i+mj-\nu}
(\partial_1^i\partial_2^jR)(ru,r^my).
\]
Every derivative of $R$ is flat, so Taylor's theorem with
an arbitrarily large remainder order proves
\eqref{eq:extension-flat}.

Consequently,
\begin{equation}
	\label{eq:extension-limit}
	\phi_r\longrightarrow H
	\quad\text{in every fixed }C^N\text{ norm},
	\qquad
	H(u,y)=b(u)(y-B(u))^n,
\end{equation}
where
\[
b(u)=a(0,0)u^k,\qquad B(u)=cu^m.
\]
On each normalized interval,
\begin{equation}
	\label{eq:extension-nondegeneracy}
	|b(u)|\ge b_*>0,\qquad |B''(u)|\ge\kappa_*>0.
\end{equation}

\subsection{Uniform local estimates for $n\ge3$}

For the limiting phase,
\[
\partial_y^{n-1}H(u,y)=n!b(u)(y-B(u)),
\qquad
\partial_y^nH(u,y)=n!b(u).
\]
The quantitative implicit function theorem gives a smooth
curve $A_r$ satisfying
\[
\partial_y^{n-1}\phi_r(u,A_r(u))=0,
\qquad
A_r\longrightarrow B\quad\text{in }C^3.
\]
Thus $|A_r''|\ge\kappa_*/2$ for sufficiently small $r$.

On a fixed-width strip around $A_r$, the derivative
$\partial_y^n\phi_r$ has a fixed sign on each normalized
interval, and its absolute value is bounded above and below
uniformly. A negative sign is handled by reflecting the
third physical coordinate.
Proposition~\ref{prop:strip} yields
\begin{equation}
	\label{eq:extension-high}
	\|\mathcal E_{\phi_r}G\|_{L^q(\mathbb R^3)}
	\le C_q\|G\|_2,\qquad q>2n.
\end{equation}
The constant is independent of sufficiently small $r$.
After choosing $\eta$ and then $r$ small enough, the
normalized horn lies in the strip required by the
proposition.

\subsection{Uniform local estimates for $n=2$}

The limiting phase now has the form
\[
H(u,y)=b(u)(y-B(u))^2.
\]
Fix a point $u_0$ in either normalized interval and put
\[
\gamma=\tfrac12B''(u_0),\qquad b_0=b(u_0).
\]
Both quantities are nonzero.
For a small fixed $h>0$, use the affine ambient frequency
transformation
\begin{equation}
	\label{eq:extension-affine}
	\begin{aligned}
		u&=u_0+hs,\\
		y&=B(u_0)+B'(u_0)hs+\gamma h^2t,\\
		z&=b_0\gamma^2h^4\zeta.
	\end{aligned}
\end{equation}
The transformed graph phase is
\[
\widetilde\phi_r(s,t)
=
\frac{
	\phi_r\bigl(
	u_0+hs,\,
	B(u_0)+B'(u_0)hs+\gamma h^2t
	\bigr)}
{b_0\gamma^2h^4}.
\]
Its limiting phase is
\[
\widetilde H_h(s,t)
=
\frac{b(u_0+hs)}{b_0}
\left(
t-
\frac{
	B(u_0+hs)-B(u_0)-B'(u_0)hs
}{\gamma h^2}
\right)^2.
\]
Taylor's theorem gives, on every fixed enlarged box,
\[
\frac{
	B(u_0+hs)-B(u_0)-B'(u_0)hs
}{\gamma h^2}
=s^2+O_{C^4}(h),
\]
and
\[
\frac{b(u_0+hs)}{b_0}=1+O_{C^4}(h).
\]
Hence
\[
\|\widetilde H_h-(t-s^2)^2\|_{C^4}\le Ch.
\]

Let $\varepsilon_0$ be the threshold in
Proposition~\ref{prop:square}.
First fix $h$ so that this error is less than
$\varepsilon_0/2$. With $h$ fixed,
\eqref{eq:extension-limit} implies
\[
\|\widetilde\phi_r-\widetilde H_h\|_{C^4}
<\varepsilon_0/2
\]
for sufficiently small $r$.
All norms are taken on the enlarged box required by that
proposition.

For data in the normalized horn and $|s|\le1$,
\[
|t-s^2|\le C\eta h^{-2}+Ch.
\]
After fixing $h$, choose $\eta$ sufficiently small so that
the transformed data lie in the smaller domain required
by Proposition~\ref{prop:square}.

The parameter Jacobian in \eqref{eq:extension-affine} is
$|\gamma|h^3$, while the absolute determinant of the ambient
linear part is $|b_0||\gamma|^3h^7$.
The extension constant therefore changes by the fixed
factor
\[
(|\gamma|h^3)^{1/2}
\bigl(|b_0||\gamma|^3h^7\bigr)^{-1/q},
\]
which is independent of $r$.
A finite covering of the two normalized intervals gives
\begin{equation}
	\label{eq:extension-square}
	\|\mathcal E_{\phi_r}G\|_{L^q(\mathbb R^3)}
	\le C_q\|G\|_2,\qquad q>\frac{14}{3},
\end{equation}
uniformly for sufficiently small $r$.
This suffices because
\[
2+2d_k
=
2+\frac{2(k+2m)}{m+1}
\ge2+\frac{4m}{m+1}
\ge\frac{14}{3}.
\]

\subsection{Scaling back and summing the annuli}

Fix $Q<q<\infty$.
For data $g_r$ supported on one original annulus, set
\[
G_r(u,y)=g_r(ru,r^my).
\]
The parameter Jacobian is $r^{m+1}$.
The ambient dilation has determinant $r^{m+1+\nu}$.
Therefore
\[
\mathcal E_\phi g_r(x)
=
r^{m+1}
\mathcal E_{\phi_r}G_r(rx_1,r^mx_2,r^\nu x_3).
\]
By \eqref{eq:extension-high} or
\eqref{eq:extension-square},
\[
\|\mathcal E_\phi g_r\|_q
\le C_q r^{\beta(q)}\|g_r\|_2,
\qquad
\beta(q)=\frac{m+1}{2}-\frac{m+1+\nu}{q}.
\]
Since $q>2+2d_k$, one has $\beta(q)>0$.
For disjoint dyadic annuli,
\[
\begin{aligned}
	\left\|\sum_r\mathcal E_\phi g_r\right\|_q
	&\le C_q\sum_r r^{\beta(q)}\|g_r\|_2\\
	&\le C_q
	\left(\sum_r r^{2\beta(q)}\right)^{1/2}
	\left(\sum_r\|g_r\|_2^2\right)^{1/2}\\
	&\lesssim_q\|g\|_2.
\end{aligned}
\]
Combining this with the exterior estimate proves
sufficiency. The case $q=\infty$ follows directly from
Cauchy--Schwarz inequality on the bounded parameter domain.

\subsection{Necessary conditions}

Let $d\mu=w(\xi)\,d\xi$ be a graph measure with continuous
density positive at the origin.
Fix $M\ge m$ and use the parameters
\[
u=\xi_1,\qquad v=\xi_2-\psi(\xi_1).
\]
Consider
\[
B_{r,M}=\{|u|\le r,\ |v|\le r^M\}.
\]
The parameter Jacobian is one. On this box,
\[
|\xi_1|\le r,\qquad
|\xi_2|\le Cr^m,\qquad
|\xi_1^ka(\xi)v^n|\le Cr^{k+Mn}.
\]
For an integer $L>k+Mn$, flatness gives
\[
|R(u,v+\psi(u))|\le C_Lr^L.
\]
Thus $|\phi|\le Cr^{k+Mn}$ for sufficiently small $r$.

Let $f_{r,M}$ be the indicator of this box, transported
to the original parameters. Then
\[
\|f_{r,M}\|_{L^2(d\mu)}
\approx  r^{(M+1)/2}.
\]
On the physical box
\[
|x_1|\le\varepsilon r^{-1},\qquad
|x_2|\le\varepsilon r^{-m},\qquad
|x_3|\le\varepsilon r^{-(k+Mn)},
\]
the oscillation is small if $\varepsilon>0$ is fixed
sufficiently small.
Positivity of the density yields
\[
|\mathcal E_\mu f_{r,M}(x)|\gtrsim r^{M+1}.
\]
An $L^2(d\mu)\to L^q$ estimate therefore requires
\[
r^{M+1-(k+Mn+m+1)/q}
\lesssim r^{(M+1)/2}.
\]
Letting $r\downarrow0$ gives
\[
q\ge\frac{2(k+Mn+m+1)}{M+1}.
\]
Taking $M=m$ yields $q\ge2+2d_k$.
Letting $M\to\infty$ in this family of necessary inequalities
gives $q\ge2n$. Hence $q\ge q_k$ is necessary.
For each fixed $M$, the limit $r\downarrow0$ is taken first;
no uniformity of the flatness constants in $M$ is required.

\subsection{Proof of Corollary~\ref{cor:extension-defining-function}}

For Corollary~\ref{cor:extension-defining-function}, the
implicit function theorem gives the smooth zero curve
$\xi_2=\psi(\xi_1)$. The fundamental theorem of calculus
gives the exact factorization
\[
F(\xi_1,\xi_2)
=
(\xi_2-\psi(\xi_1))B_F(\xi_1,\xi_2),
\]
where
\[
B_F(\xi_1,\xi_2)
=
\int_0^1
\partial_{\xi_2}F
\bigl(
\xi_1,\,
\psi(\xi_1)+t(\xi_2-\psi(\xi_1))
\bigr)\,dt.
\]
The coefficient is smooth and satisfies
\[
B_F(0,0)=\partial_{\xi_2}F(0,0)\ne0.
\]
Consequently,
\[
\xi_1^ka(\xi)F(\xi)^n+R(\xi)
=
\xi_1^k\bigl(a(\xi)B_F(\xi)^n\bigr)
(\xi_2-\psi(\xi_1))^n+R(\xi).
\]
The new coefficient is nonzero at the origin, so
Theorem~\ref{thm:extension-tangential} applies.

\begin{remark}
	The substitutions $v=\xi_2-\psi(\xi_1)$ above only
	reparametrize surface integrals or describe test sets.
	They are not used to transfer restriction estimates through
	nonlinear ambient transformations.
	Every ambient transformation used in the normalization
	of an extension estimate is affine, and its Jacobian has
	been explicitly accounted for.
\end{remark}

\par\bigskip
\begingroup
\small
\raggedright

\textbf{Wenjuan Li}\\
School of Mathematics and Statistics, Northwestern Polytechnical University, Xi'an, 710129, China\\
\textit{E-mail:} \texttt{liwj@nwpu.edu.cn}
\par\medskip

\textbf{Hongqiang Wang}\\
School of Mathematics and Statistics, Northwestern Polytechnical University, Xi'an, 710129, China\\
\textit{E-mail:} \texttt{whq2023@mail.nwpu.edu.cn}
\par\medskip

\textbf{Huiju Wang}\\
School of Mathematics and Statistics, Henan University, Kaifeng, 475000, China\\
\textit{E-mail:} \texttt{huijuwang@mail.nwpu.edu.cn}
\par

\endgroup

\end{document}